\documentclass{amsart} 
\usepackage[top=3cm,left=3.1cm,right=3.1cm,bottom=2.1cm]{geometry}
\usepackage{amsfonts}
\usepackage{amsmath,amsthm,amscd,mathrsfs,amssymb,graphicx,enumerate,
  dsfont}
\usepackage{xypic}

\setcounter{tocdepth}{1}
\setcounter{part}{0}

\newtheorem*{thm3}{Theorem}
\newtheorem{thm2}{Theorem}
\newtheorem{cor2}{Corollary}
\newtheorem{conj2}{Conjecture}
\newtheorem{thm}{Theorem}[section]

\newtheorem{prop}[thm]{Proposition}

\theoremstyle{definition}

\newcommand{\C}{\ensuremath\mathds{C}}

\newcommand{\Z}{\ensuremath\mathds{Z}}
\newcommand{\Q}{\ensuremath\mathds{Q}}

\newcommand{\h}{\ensuremath\mathfrak{h}}
\renewcommand{\o}{\ensuremath\mathfrak{0}}

\newcommand{\HH}{\ensuremath\mathrm{H}}
\newcommand{\CH}{\ensuremath\mathrm{CH}}

\begin{document}

\thispagestyle{empty} 
\title{On the motive of some hyperK\"ahler varieties} \author{Charles Vial}

\thanks{2010 {\em Mathematics Subject Classification.} 14C25, 14C15,
  53C26, 14J28, 14K99}

\thanks{{\em Key words and phrases.} Hyperk\"ahler manifolds,
  Irreducible holomorphic symplectic varieties, K3 surfaces, abelian
  varieties, Hilbert schemes of points, Motives, Algebraic cycles,
  Chow ring, Chow--K\"unneth decomposition, Bloch--Beilinson
  filtration}

\thanks{}

\thanks{The author is supported by EPSRC Early Career Fellowship
  number EP/K005545/1.}

\address{DPMMS, University of Cambridge, Wilberforce Road, Cambridge
  CB3 0WB, UK} \email{c.vial@dpmms.cam.ac.uk}

\date{\today}

\begin{abstract} We show that the motive of the Hilbert scheme of
  length-$n$ subschemes on a K3 surface or on an abelian surface
  admits a decomposition similar to the decomposition of the motive of
  an abelian variety obtained by Shermenev, Beauville, and Deninger and Murre.
\end{abstract}

\maketitle

\section*{Introduction}

In this work, we fix a field $k$ and all varieties are defined over
this field $k$. Chow groups are always meant
with rational coefficients and $\HH^*(-,\Q)$ is Betti cohomology with
rational coefficients.  Up to replacing Betti cohomology with a suitable Weil
cohomology theory (for example $\ell$-adic cohomology), we may and we will
assume that $k$ is a subfield of the complex numbers $\C$. We use freely the
language of (Chow) motives as
is described in \cite{manin}. \medskip

Work of Shermenev \cite{shermenev}, Beauville \cite{beauville1}, and
Deninger and Murre \cite{dm} unravelled the structure of the motives
of abelian varieties~:

\begin{thm3} [Beauville, Deninger--Murre, Shermenev] Let $A$ be an
  abelian variety of dimension $g$. Then the Chow motive $\h(A)$ of
  $A$ splits as
\begin{equation} \label{eq CK} \h(A) = \bigoplus_{i=0}^{2g} \h^i(A)
\end{equation}
 with the following
  properties~:
\begin{enumerate}[(i)]
\item $\HH^*(\h^i(A),\Q) = \HH^i(A,\Q)$ ;
\item the multiplication $\h(A)\otimes \h(A) \rightarrow \h(A)$
  (\emph{cf.} \eqref{eq mult}) factors through $\h^{i+j}(A)$ when
  restricted to $\h^i(A)\otimes \h^j(A)$~;
\item the morphism $[n]^* : \h^i(A) \rightarrow \h^i(A)$ induced by
  the multiplication by $n$ morphism $[n] : A \rightarrow A$ is
  multiplication by $n^i$. Here, $n$ is an integer. In particular, $\h^i(A)$ is
an eigen-submotive for the action of $[n]$.
\end{enumerate}
\end{thm3}

For an arbitrary smooth projective variety $X$, it is expected that a
decomposition of the motive $\h(X)$ as in \eqref{eq CK} satisfying
\emph{(i)} should exist ; see \cite{murre}. Such a decomposition is
called a \emph{Chow--K\"unneth decomposition}. However, in general,
there is no analogue of the multiplication by $n$ morphisms, and the
existence of a Chow--K\"unneth decomposition of the motive of $X$
satisfying \emph{(ii)} (in that case, the Chow--K\"unneth
decomposition is said to be \emph{multiplicative}) is very
restrictive. We refer to \cite[Section 8]{sv} for some discussion on
the existence of such a multiplicative decomposition.

Nonetheless, inspired by the seminal work of Beauville and Voisin
\cite{bv}, \cite{beauville2} and \cite{voisin2}, we were led
to ask in \cite{sv} whether the motives of hyperK\"ahler varieties
admit a multiplicative decomposition similar to that of the motive of
abelian varieties as in the theorem of Beauville, Deninger and Murre,
and Shermenev. Here, by hyperK\"ahler variety we mean a simply
connected smooth projective variety $X$ whose space of global
$2$-forms $\HH^0(X,\Omega_X^2)$ is spanned by a nowhere degenerate
$2$-form. When $k=\C$, a hyperK\"ahler variety is nothing but a
projective irreducible holomorphic symplectic manifold \cite{beauvillec1}.

\begin{conj2} \label{conj} Let $X$ be a hyperK\"ahler variety of
  dimension $2n$. Then the Chow motive $\h(X)$ of $X$ splits
  as $$\h(X) = \bigoplus_{i=0}^{4n} \h^i(X)$$ with the property that
\begin{enumerate}[(i)]
\item $\HH^*(\h^i(X),\Q) = \HH^i(X,\Q)$ ;
\item the multiplication $\h(X)\otimes \h(X) \rightarrow \h(X)$
  (\emph{cf.} \eqref{eq mult}) factors through $\h^{i+j}(X)$ when
  restricted to $\h^i(X)\otimes \h^j(X)$.
\end{enumerate}
\end{conj2}

An important class of hyperK\"ahler varieties is given by the Hilbert
schemes $S^{[n]}$ of length-$n$ subschemes on a K3 surface $S$ ; see
\cite{beauvillec1}. The following theorem shows in particular that the
motive of $S^{[n]}$ for $S$ a K3 surface admits a decomposition with
properties \emph{(i)} and \emph{(ii)} and thus answers affirmatively
the question raised in Conjecture \ref{conj} in that case.

\begin{thm2} \label{thm main} Let $S$ be either a K3 surface or an
  abelian surface, and let $n$ be a positive integer. Then the Chow
  motive $\h(S^{[n]})$ of $S^{[n]}$ splits as $$\h(S^{[n]}) =
  \bigoplus_{i=0}^{4n} \h^i(S^{[n]})$$ with the property that
\begin{enumerate}[(i)]
\item $\HH^*(\h^i(S^{[n]}),\Q) = \HH^i(S^{[n]},\Q)$ ;
\item the multiplication $\h(S^{[n]})\otimes \h(S^{[n]}) \rightarrow
  \h(S^{[n]})$ factors through $\h^{i+j}(S^{[n]})$ when restricted to
  $\h^i(S^{[n]})\otimes \h^j(S^{[n]})$.
\end{enumerate}
\end{thm2}

Theorem 6 of \cite{sv} can then be improved by including the Hilbert
schemes of length-$n$ subschemes on K3 surfaces. 
Theorem
\ref{thm main} is due for $S$ a K3 surface and $n=1$ to Beauville and
Voisin \cite{bv} (see \cite[Proposition 8.14]{sv} for the link between
the original statement of \cite{bv} (recalled in Theorem \ref{thm bv}) and the
statement given here), and
was established in \cite{sv} for $n=2$. Its proof in full generality
is given in Section 3. Note that, as explained in Section 1, the
existence of a Chow--K\"unneth decomposition for the Hilbert scheme
$S^{[n]}$ of any smooth projective surface $S$ goes back to de Cataldo
and Migliorini \cite{dCM} (the existence of such a decomposition for
$S$ is due to Murre \cite{murre2}).
Our main contribution is the claim
that by choosing the Beauville--Voisin decomposition of K3 surfaces \cite{bv},
the induced Chow--K\"unneth decomposition of Hilbert schemes of K3 surfaces
established by de Cataldo and Migliorini \cite{dCM} is
multiplicative, i.e. it satisfies \emph{(ii)}.\medskip

Let us then define for all $i\geq 0$ and all $s \in \Z$ 
$$\CH^i(S^{[n]})_s := \CH^i(\h^{2i-s}(S^{[n]})).$$
We have the following corollary to Theorem \ref{thm main}~:

\begin{thm2} \label{thm bigrading} The Chow ring $\CH^*(S^{[n]})$
  admits a multiplicative bigrading
 $$\CH^*(S^{[n]}) = \bigoplus_{i,s} \CH^i(S^{[n]})_s$$ 
 that is induced by a Chow--K\"unneth decomposition of the diagonal
 (as defined in \S 1).  Moreover, 
 the Chern classes $c_i(S^{[n]})$ belong to the graded-zero part
 $\CH^i(S^{[n]})_0$ of $\CH^i(S^{[n]})$.
\end{thm2}

Theorem \ref{thm bigrading} answers partially a question raised by
Beauville in \cite{beauville2}~: the  filtration $F^\bullet$ defined by
$\mathrm{F}^l\CH^i(X) :=
\bigoplus_{s\geq l}\CH^i(X)_s$ is a filtration on the Chow ring $\CH^*(S^{[n]})$
that is split. Moreover, this filtration is expected to be the one predicted by 
Bloch and Beilinson (because it is induced
by a Chow--K\"unneth decomposition -- conjecturally all such
filtrations coincide). For this filtration to be of Bloch--Beilinson type, one
would need to establish Murre's conjectures, namely that $\CH^i(S^{[n]})_s = 0$
for $s<0$ and that $\bigoplus_{s>0} \CH^i(S^{[n]})_s$ is exactly the kernel of
the cycle class map $\CH^i(S^{[n]}) \rightarrow \HH^{2i}(S^{[n]},\Q)$. 
Note that for $i=0,1,2n-1$ or $2n$, it is indeed the case that $\CH^i(S^{[n]})_s = 0$
for $s<0$ and that $\bigoplus_{s>0} \CH^i(S^{[n]})_s = \mathrm{Ker} \, \{\CH^i(S^{[n]}) \rightarrow \HH^{2i}(S^{[n]},\Q)\}$. Therefore, we have 
\begin{cor2}\label{cor main}
Let $i_1,\ldots, i_m$ be positive integers such that $i_1 + \cdots + i_m = 2n-1$ or $2n$, and let $\gamma_l$ be cycles in $\CH^{i_l}(S^{[n]})$ for $l=1,\ldots, m$ that sit in $\CH^{i_l}(S^{[n]})_0$ for the grading induced by the decomposition of Theorem \ref{thm main}. Then, $[\gamma_1]\cdot [\gamma_2] \cdots  [\gamma_m] = 0$ in $\HH^*(S^{[n]},\Q)$ if and only if  $\gamma_1\cdot \gamma_2 \cdots \gamma_m = 0$ in $\CH^*(S^{[n]})$.
\end{cor2}
 \medskip

Let us mention that Theorem \ref{thm main} and Theorem \ref{thm
  bigrading} (and a fortiori Corollary \ref{cor main}) are also valid for hyperK\"ahler varieties that are
birational to $S^{[n]}$, for some K3 surface $S$. Indeed, Riess \cite{greiner}
showed that birational hyperK\"ahler varieties have isomorphic Chow rings and
isomorphic Chow motives (as algebras in the category of Chow motives) ; see also
\cite[Section 6]{sv}. As for more
evidence as why Conjecture \ref{conj} should be true, Mingmin Shen and
I showed \cite{sv} that the variety of lines on a very general cubic
fourfold satisfies the conclusions of Theorem \ref{thm
  bigrading}. \medskip

Finally, we use the notion of multiplicative Chow--K\"unneth
decomposition to obtain new decomposition results in the spirit of
\cite{voisin k3} ; see Theorem \ref{thm dec}.

\subsection*{Notations} A morphism denoted $pr_r$ will always denote
the projection on the $r^\mathrm{th}$ factor and a morphism denoted
$pr_{s,t}$ will always denote the projection on the product of the
$s^\mathrm{th}$ and $t^\mathrm{th}$ factors. The context will usually
make it clear which varieties are involved.  Chow groups $\CH^i$ are
with rational coefficients. If $X$ is a variety, the cycle class map
sends a cycle $\sigma \in \CH^i(X)$ to its cohomology class $[\sigma]
\in \HH^{2i}(X,\Q)$.  If $Y$ is another variety and if $\gamma$ is a
correspondence in $\CH^i(X \times Y)$, its transpose ${}^t\gamma \in
\CH^i(Y \times X)$ is the image of $\gamma$ under the action of the
permutation map $X \times Y \rightarrow Y \times X$. If
$\gamma_1,\cdots, \gamma_n$ are correspondences in $\CH^*(X \times
Y)$, then the correspondence $\gamma_1 \otimes \cdots \otimes \gamma_n
\in \CH^*(X^n\times Y^n)$ is defined as $\gamma_1 \otimes \cdots
\otimes \gamma_n := \prod_{i=1}^n (pr_{i,n+i})^*\gamma_i$.

\vspace{10pt}
\section{Chow--K\"unneth decompositions}

A Chow motive $M$ is said to have a \emph{Chow--K\"unneth
  decomposition} if it splits as $M=\bigoplus_{i \in \Z} M^i$ with
$\HH^*(M^i,\Q) = \HH^i(M,\Q)$. In other words, $M$ admits a K\"unneth
decomposition that lifts to rational equivalence. Concretely, if
$M=(X,p,n)$ with $X$ a smooth projective variety of pure dimension $d$
and $p \in \CH^d(X\times X)$ an idempotent and $n$ an integer, then
$M$ has a Chow--K\"unneth decomposition if there exist finitely many
correspondences $p^i \in \CH^d(X \times X)$, $i \in \Z$, such that
$p=\sum_i p^i$, $p^i \circ p^i = p^i$, $p\circ p^i = p^i \circ p =
p^i$, $p^i \circ p^j = 0$ for all $i \in \Z$ and all $j \neq i$ and
such that $p^i_*\HH^*(X,\Q) = p_*\HH^{i+2n}(X,\Q)$.

A smooth projective variety $X$ of dimension $d$ has a Chow--K\"unneth
decomposition if its Chow motive $\h(X)$ has a Chow--K\"unneth
decomposition, that is, there exist correspondences $\pi^i \in \CH^d(X\times X)$
such that $\Delta_X = \sum_{i=0}^{2d} \pi^i$, with
$\pi^i \circ \pi^i = \pi^i$, $\pi^i \circ \pi^j = 0$ for $i \neq j$
and $\pi^i_*\HH^*(X,\Q) = \HH^i(X,\Q)$. A Chow--K\"unneth decomposition
$\{\pi^i, 0 \leq i \leq 2d\}$ of $X$ is said to be \emph{self-dual} if
$\pi^{2d-i} = {}^t\pi^i$ for all $i$.

If $\o$ is the class of a rational point on $X$ (or more generally a
zero-cycle of degree $1$ on $X$), then $\pi^0 := pr_1^*\o = \o \times
X$ and $\pi^{2d} = pr_2^*\o = X \times \o$ define mutually orthogonal
idempotents such that $\pi^0_*\HH^*(X,\Q) = \HH^0(X,\Q)$ and
$\pi^{2d}_*\HH^*(X,\Q) = \HH^{2d}(X,\Q)$.  Note that pairs of
idempotents with the property above are certainly not unique~: a
different choice (modulo rational equivalence) of zero-cycle of degree
$1$ gives different idempotents in the ring of correspondences
$\CH^d(X \times X)$. From the above, one sees that every curve $C$
admits a Chow--K\"unneth decomposition~: one defines $\pi^0$ and
$\pi^2$ as above and then $\pi^1$ is simply given by $\Delta_C - \pi^0
- \pi^2$. It is a theorem of Murre \cite{murre2} that every smooth
projective surface $S$ admits a Chow--K\"unneth decomposition
$\Delta_S = \pi_S^0 + \pi_S^1 + \pi_S^2 + \pi_S^3 + \pi_S^4$.

The notion of Chow--K\"unneth decomposition is significant because
when it exists it induces a filtration $\mathrm{F}^l\CH^i(X) :=
\bigoplus_{s\geq l}\CH^i(X)_s$ on the Chow group $\CH^*(X)$ which
should not depend on the choice of the Chow--K\"unneth decomposition
$\Delta_X = \sum_{i=0}^{2d} \pi^i$ and which should be of
Bloch--Beilinson type ;
\emph{cf.} \cite{jannsen, murre}. \\

Let now $S^{[n]}$ denote the Hilbert scheme of length-$n$ subschemes on a
smooth projective surface $S$. By Fogarty \cite{fogarty}, the scheme $S^{[n]}$
is in fact a
smooth projective variety, and it comes equipped with a morphism
$S^{[n]} \rightarrow S^{(n)}$ to the $n^\mathrm{th}$ symmetric product
of $S$, called the \emph{Hilbert--Chow morphism}.  De Cataldo and
Migliorini \cite{dCM} have given an explicit description of the motive
of $S^{[n]}$. Let us introduce some notations related to this
description. Let $\mu = \{A_1, \ldots, A_l\}$ be a partition of the
set $\{1,\ldots, n\}$, where all the $A_i$'s are non-empty. The
integer $l$, also denoted $l(\mu)$, is the \emph{length} of the
partition $\mu$. Let $S^\mu \simeq S^l \subseteq S^n$ be the
set $$\{(s_1,\ldots,s_n) : s_i = s_j \mbox{ if } i,j \in A_k \mbox{
  for some } k\}$$ and let $$\Gamma_\mu := (S^\mu \times_{S^{(n)}}
S^{[n]})_{red} \subset S^\mu \times S^{[n]},$$ where the subscript $red$ means
the  underlying
reduced scheme. It is known that $\Gamma_\mu$
is irreducible of dimension $n+l(\mu)$.  The subgroup
$\mathfrak{S}_\mu$ of $\mathfrak{S}_n$ that acts on $\{1,\ldots,n\}$
by permuting the $A_i$'s with same cardinality acts on the first
factor of the product $ S^\mu \times S^{[n]}$, and the correspondence
$\Gamma_\mu$ is invariant under this action. We can therefore
define $$\hat{\Gamma}_\mu := \Gamma_\mu / \mathfrak{S}_\mu \in
\CH^*(S^{(\mu)} \times S^{[n]}) = \CH^*(S^\mu \times
S^{[n]})^{\mathfrak{S}_\mu} ,$$ where $S^{(\mu)} :=
S^\mu/\mathfrak{S}_\mu$. Since for a variety $X$ endowed with the
action of a finite group $G$ we have $\CH^*(X/G) = \CH^*(X)^G$ (with
rational coefficients), the calculus of correspondences and the theory
of motives in the setting of smooth projective varieties endowed with
the action of a finite group is similar in every way to the usual case
of smooth projective varieties. We will therefore freely consider
actions of correspondences and motives of quotient varieties by the action of a
finite group.

The symmetric groups $\mathfrak{S}_n$ acts naturally on the set of partitions of
$\{1, \ldots, n\}$. By choosing one element in each orbit for the above action,
we may define a subset $\mathfrak{B}(n)$ of the set of partitions of $\{1,
\ldots, n\}$. This set is isomorphic to the set of partitions of the integer
$n$.

\begin{thm}[de Cataldo and Migliorini \cite{dCM}] \label{thm dcm}
Let $S$ be a smooth
  projective surface defined over an arbitrary field. The morphism
\begin{equation}
\label{eq dCM}
\bigoplus_{\mu \in \mathfrak{B}(n)} {}^t\hat{\Gamma}_\mu \ : \
\h(S^{[n]}) 
\stackrel{\simeq}{\longrightarrow} \bigoplus_{\mu \in \mathfrak{B}(n)} 
\h(S^{(\mu)})(l(\mu)-n)
\end{equation}
is an isomorphism of Chow motives. Moreover, its inverse is given by
the correspondence $\sum_{\mu \in \mathfrak{B}(n)} \frac{1}{m_\mu}
\hat{\Gamma}_\mu$ for some non-zero rational numbers $m_\mu$.
\end{thm}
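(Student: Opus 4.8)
The plan is to read the asserted isomorphism categorically. Abbreviate the candidate components of the map and of its inverse by $a_\mu := {}^t\hat{\Gamma}_\mu \colon \h(S^{[n]}) \to \h(S^{(\mu)})(l(\mu)-n)$ and $b_\mu := \frac{1}{m_\mu}\hat{\Gamma}_\mu \colon \h(S^{(\mu)})(l(\mu)-n) \to \h(S^{[n]})$. Then the theorem is equivalent to the two families of identities
$$a_\mu \circ b_\nu = \delta_{\mu\nu}\, \mathrm{id}_{\h(S^{(\mu)})(l(\mu)-n)} \qquad \text{and} \qquad \sum_{\mu \in \mathfrak{B}(n)} b_\mu \circ a_\mu = \Delta_{S^{[n]}}.$$
The first identity (\emph{orthogonality}) exhibits each $\h(S^{(\mu)})(l(\mu)-n)$ as the image of the idempotent $b_\mu \circ a_\mu$ and makes these idempotents mutually orthogonal; the second (\emph{completeness}) says that they sum to the full diagonal. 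Granting both, the collection $(a_\mu)$ is an isomorphism with inverse $\sum_\mu b_\mu$. After unwinding the Tate twists, both reduce to intersection-theoretic computations of the compositions ${}^t\hat{\Gamma}_\mu \circ \hat{\Gamma}_\nu \in \CH^*(S^{(\nu)} \times S^{(\mu)})$ and $\sum_\mu \frac{1}{m_\mu} \hat{\Gamma}_\mu \circ {}^t\hat{\Gamma}_\mu \in \CH^{2n}(S^{[n]} \times S^{[n]})$.

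The geometric input throughout is the semismallness of the Hilbert--Chow morphism $\pi \colon S^{[n]} \to S^{(n)}$: over the locally closed stratum of $S^{(n)}$ parametrising zero-cycles of partition type $\mu$, which has codimension $2(n-l(\mu))$, the fibres of $\pi$ have dimension exactly $n - l(\mu)$. I would first recall this from the description of the punctual Hilbert scheme (Brian\c{c}on, Iarrobino). Semismallness is precisely what forces the fibre products computing the compositions above to have at most the expected dimension, so that no cycle of the wrong codimension can contribute.

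For orthogonality I would realise ${}^t\hat{\Gamma}_\mu \circ \hat{\Gamma}_\nu$ on the fibre product $\Gamma_\nu \times_{S^{[n]}} \Gamma_\mu$ and push it forward to $S^{(\nu)} \times S^{(\mu)}$. When $\mu \neq \nu$ the two incidence loci lie over incompatible strata of $S^{(n)}$, so semismallness bounds the dimension of the fibre product strictly below that of the class being computed and the push-forward vanishes. When $\mu = \nu$ the generic contribution, over the locus of reduced cycles of type $\mu$, is a nonzero multiple $m_\mu$ of the diagonal $\Delta_{S^{(\mu)}}$; the constant $m_\mu$ is computed from the top self-intersection of the (equidimensional) fibres of $\pi$ over that stratum together with the order of $\mathfrak{S}_\mu$, and a second dimension count shows that all remaining contributions are supported over smaller strata and hence drop out. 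The delicate point is this boundary bookkeeping: one must check that every correction term genuinely has dimension strictly below the generic locus.

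The main obstacle is completeness, $\sum_\mu \frac{1}{m_\mu} \hat{\Gamma}_\mu \circ {}^t\hat{\Gamma}_\mu = \Delta_{S^{[n]}}$. The decomposition theorem for the semismall map $\pi$ furnishes the corresponding identity in $\HH^*(S^{[n]} \times S^{[n]}, \Q)$, so $q := \sum_\mu \frac{1}{m_\mu} \hat{\Gamma}_\mu \circ {}^t\hat{\Gamma}_\mu$ and $\Delta_{S^{[n]}}$ are idempotent correspondences with equal cohomology class; but the realisation functor is not conservative on Chow motives, so this does not by itself yield equality in $\CH^{2n}(S^{[n]} \times S^{[n]})$. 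I would instead argue by induction on $n$, using the nested-Hilbert-scheme correspondence between $S^{[n]}$ and $S^{[n-1]}$ to describe the boundary strata of $S^{[n]}$ as projective bundles over nested Hilbert schemes of smaller length. This allows $\Delta_{S^{[n]}} - q$ to be written as a cycle supported over the non-reduced strata and, via the inductive hypothesis together with the orthogonality computation above, to be recognised as zero. Establishing that the ``missing'' part of the diagonal is accounted for integrally --- not merely cohomologically --- by the correspondences $\hat{\Gamma}_\mu$ with $l(\mu) < n$ is the crux of the whole argument.
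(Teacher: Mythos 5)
A preliminary remark: the paper contains no proof of Theorem \ref{thm dcm} at all --- it is quoted verbatim from de Cataldo and Migliorini \cite{dCM} --- so your attempt has to be measured against their argument. Your framing (reduce to \emph{orthogonality} $a_\mu \circ b_\nu = \delta_{\mu\nu}\,\mathrm{id}$ plus \emph{completeness} $\sum_\mu b_\mu \circ a_\mu = \Delta_{S^{[n]}}$) is reasonable, and your orthogonality step is essentially their computation, with one local inaccuracy: for $\mu \neq \nu$ semismallness does \emph{not} in general bound $\dim \left(\Gamma_\nu \times_{S^{[n]}} \Gamma_\mu\right)$ strictly below the dimension $l(\mu)+l(\nu)$ of the class being computed. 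Already for $n=2$, with $\nu$ the two-block partition and $\mu$ the one-block partition, the fibre product is a $\PP^1$-bundle over the small diagonal of $S^2 \times S$ and has exactly the expected dimension $3$; the vanishing comes from the other mechanism you mention, namely that the image in $S^{(\nu)} \times S^{(\mu)}$ has strictly smaller dimension, so the pushforward of the top-dimensional components dies. That bookkeeping does go through for all $\mu \neq \nu$, and for $\mu = \nu$ it yields $m_\mu \Delta_{S^{(\mu)}}$ with $m_\mu \neq 0$ computed fibrewise (e.g.\ $m_\mu = -2$ for the exceptional correspondence on $S^{[2]}$), so this half is sound once the dimension counts are placed where they belong.

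The genuine gap is in completeness, which you correctly identify as the crux but whose repair does not work as proposed. Knowing that $\Delta_{S^{[n]}} - q$ is supported over the non-reduced strata proves nothing: the boundary supports plenty of nonzero correspondences of the right codimension (indeed $\hat{\Gamma}_\mu \circ {}^t\hat{\Gamma}_\mu$ itself, for $l(\mu) < n$, is one), and your appeal to ``the inductive hypothesis together with the orthogonality computation'' never produces an identity in $\CH^{2n}(S^{[n]} \times S^{[n]})$ forcing the difference to vanish --- the nested-Hilbert-scheme geometry describes the strata but supplies no such relation. De Cataldo and Migliorini sidestep the identity $\sum_\mu b_\mu \circ a_\mu = \Delta_{S^{[n]}}$ entirely: they prove that $\bigoplus_\mu {}^t\hat{\Gamma}_\mu$ induces an isomorphism $\CH_*(S^{[n]} \times T) \cong \bigoplus_\mu \CH_*(S^{(\mu)} \times T)$ for \emph{every} variety $T$ --- surjectivity via the stratification of the Hilbert--Chow morphism, whose strata fibre over the strata of $S^{(n)}$ with fibres built from punctual Hilbert schemes admitting affine cell decompositions (Ellingsrud--Str\o mme), fed into localization sequences; injectivity from the nondegenerate, diagonal pairing given by the orthogonality relations --- and then invoke Manin's identity principle \cite{manin} to conclude that \eqref{eq dCM} is an isomorphism of Chow motives. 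Once the map is known to be an isomorphism, the one-sided identity $a_\mu \circ b_\nu = \delta_{\mu\nu}\,\mathrm{id}$ formally forces $\sum_\mu \frac{1}{m_\mu}\hat{\Gamma}_\mu$ to be the two-sided inverse, since a right inverse of an isomorphism is the inverse; completeness is thus a corollary, never an input. That universal Chow-group computation plus Manin's identity principle is the idea missing from your plan, and without it the induction you sketch cannot close.
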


Let now $\Delta_S = \pi_S^0 + \pi_S^1 + \pi_S^2 + \pi_S^3 + \pi_S^4$
be a Chow--K\"unneth decomposition of $S$. For all non-negative integers $m$,
the correspondences
\begin{equation}
\label{eq CKSm}
\pi^i_{S^m} := \sum_{i_1+\ldots + i_m = i} \pi_S^{i_1} \otimes \cdots 
\otimes \pi_S^{i_m} \quad \mbox{in } \CH^{2m}(S^m \times S^m)
\end{equation}
define a Chow--K\"unneth decomposition of $S^m$ that is clearly
$\mathfrak{S}_m$-equivariant. Therefore, these correspondences can be
seen as correspondences of $\CH^{2m}(S^{(m)} \times S^{(m)})$ and they
do define a Chow--K\"unneth decomposition of the $m^\mathrm{th}$
symmetric product $S^{(m)}$.  Let us denote this
decomposition $$\Delta_{S^{(m)}} = \pi^0_{S^{(m)}} + \cdots +
\pi^{4m}_{S^{(m)}} \quad \mbox{in } \CH^{2m}(S^{(m)} \times
S^{(m)}).$$ Since
$S^l$ is endowed with a $\mathfrak{S}_l$-equivariant
Chow--K\"unneth decomposition as above and since $\mathfrak{S}_\mu$ is
a subgroup of $\mathfrak{S}_l$, $S^\mu \simeq S^l$ is endowed with a
$\mathfrak{S}_\mu$-equivariant Chow--K\"unneth
decomposition. Therefore $S^{(\mu)}$ is endowed with a natural
Chow--K\"unneth decomposition $$\Delta_{S^{(\mu)}} = \pi^0_{S^{(\mu)}}
+ \cdots + \pi^{4l}_{S^{(\mu)}} \quad \mbox{in } \CH^{2l}(S^{(\mu)}
\times S^{(\mu)})$$ coming from that of $S$. In particular, the
isomorphism of de Cataldo and Migliorini gives a natural
Chow--K\"unneth decomposition for the Hilbert scheme $S^{[n]}$ coming
from that of $S$. Precisely, this Chow--K\"unneth decomposition is
given by
\begin{equation}
\label{eq CKhilb}
\pi_{S^{[n]}}^i = \sum_{\mu \in \mathfrak{B}(n)} \frac{1}{m_\mu}
\hat{\Gamma}_\mu 
\circ \pi^{i-2n+2l(\mu)}_{S^{(\mu)}} \circ {}^t\hat{\Gamma}_\mu.
\end{equation}
Note that if the Chow--K\"unneth decomposition $\{\pi_S^i\}$ of $S$ is
self-dual, then the Chow--K\"unneth decomposition $\{\pi_{S^{[n]}}^i\}$ of
$S^{[n]}$ is also self-dual. 

We will show that when $S$ is either a K3 surface or an abelian
surface the Chow--K\"unneth decomposition above induces a
decomposition of the motive $\h(S^{[n]})$ that satisfies the
conclusions of Theorem \ref{thm main} for an appropriate choice of
Chow--K\"unneth decomposition for $S$.

\section{Multiplicative Chow--K\"unneth decompositions} \label{sec mult}

Let $X$ be a smooth projective variety of dimension $d$ and let
$\Delta_3 \in \CH_d(X \times X \times X)$ be the small diagonal, that
is, the class of the subvariety $$\{(x,x,x) : x \in X\} \subset X
\times X \times X.$$ If we view $\Delta_3$ as a correspondence from $X
\times X$ to $X$, then $\Delta_3$ induces the \emph{multiplication
  morphism}
\begin{equation} \label{eq mult}
\h(X) \otimes \h(X) \rightarrow \h(X).
\end{equation}
Note that if $\alpha$ and $\beta$ are cycles in $\CH^*(X)$, then
$(\Delta_3)_*(\alpha \times \beta) = \alpha \cdot \beta$ in $\CH^*(X)$.

If $X$ admits a Chow--K\"unneth decomposition
\begin{equation}\label{eq ckh}
\h(X) = \bigoplus_{i=0}^{2d} \h^i(X),
\end{equation}
 then this decomposition is said to be
\emph{multiplicative} if the multiplication morphism $\h^i(X) \otimes
\h^j(X) \rightarrow \h(X)$ factors through $\h^{i+j}(X)$ for all $i$
and $j$. For a variety to be endowed with a multiplicative
Chow--K\"unneth decomposition is very restrictive ; we refer to
\cite{sv}, where this notion was introduced, for some
discussions. Examples of varieties admitting a multiplicative
Chow--K\"unneth decomposition are provided by \cite[Theorem 6]{sv} and
include hyperelliptic curves, K3 surfaces, abelian varieties, and their Hilbert
squares.

If one writes $\Delta_X = \pi_X^0 + \ldots + \pi_X^{2d}$ for the
Chow--K\"unneth decomposition \eqref{eq ckh} of $X$, then by definition this
decomposition is multiplicative if $$\pi_X^k \circ \Delta_3 \circ
(\pi_X^i \otimes \pi_X^j) = 0 \mbox{ in } \CH_d(X^3) \mbox{ for all }
k \neq i+j,$$ or equivalently if
$$({}^t\pi_X^i \otimes {}^t\pi_X^j \otimes \pi_X^k)_*\Delta_3 =  0 
\mbox{ in } \CH_d(X^3) \mbox{ for all } k \neq i+j.$$
 If the Chow--K\"unneth decomposition $\{\pi_X^i\}$ is self-dual, then it is
multiplicative if 
 $$(\pi_X^i \otimes \pi_X^j \otimes \pi_X^k)_*\Delta_3 =  0 
 \mbox{ in } \CH_d(X^3) \mbox{ for all } i+j+k \neq 4d.$$
   Note that the
above three relations always hold modulo homological
equivalence.\medskip

Given a multiplicative Chow--K\"unneth decomposition $\pi^i_S$ for a
surface $S$, one could expect the Chow--K\"unneth decomposition
\eqref{eq CKhilb} of $S^{[n]}$ to be multiplicative. This was answered
affirmatively when $n=2$ for any smooth projective variety $X$ with a self-dual
Chow--K\"unneth decomposition (under some additional assumptions on the Chern
classes of $X$) in \cite{sv}, and a similar result when $n=3$ will appear in
\cite{sv2}. (For $n>3$, $X^{[n]}$ is no longer smooth if $X$ is smooth of
dimension $> 2$). Here we deal with the case when $S$ is a K3 surface or an
abelian surface and will prove Theorem \ref{thm main}. By the
isomorphism \eqref{eq dCM} of de Cataldo and Migliorini, it is enough
to check that
$$(\hat{\Gamma}_{\mu_1} \otimes \hat{\Gamma}_{\mu_2} \otimes
\hat{\Gamma}_{\mu_3})^* (\pi^i_{S^{[n]}} \otimes \pi^j_{S^{[n]}}
\otimes \pi^k_{S^{[n]}})_*\Delta_3 = 0$$ for all $i+j+k \neq 8n$ and
for all partitions $\mu_1, \mu_2$ and $\mu_3$ of $\{1,\ldots,n\}$, or
equivalently for all $i,j,k$ such that $(\pi^i_{S^{[n]}} \otimes
\pi^j_{S^{[n]}} \otimes \pi^k_{S^{[n]}})_*[\Delta_3] = 0$ in
$\HH^*(S^{[n]} \times S^{[n]} \times S^{[n]}, \Q)$ and all partitions
$\mu_1, \mu_2$ and $\mu_3$. By \eqref{eq CKhilb}, it is even enough to
show that
\begin{equation} \label{eq 1} \left(\hat{\Gamma}_{\mu_1} \otimes
    \hat{\Gamma}_{\mu_2} \otimes \hat{\Gamma}_{\mu_3}\right)^* \left(
    \left(\hat{\Gamma}_{\nu_1} \circ \pi^i_{S^{(\nu_1)}} \circ
      {}^t\hat{\Gamma}_{\nu_1} \right) \otimes
    \left(\hat{\Gamma}_{\nu_2} \circ \pi^j_{S^{(\nu_2)}} \circ
      {}^t\hat{\Gamma}_{\nu_2} \right) \otimes
    \left(\hat{\Gamma}_{\nu_3} \circ \pi^k_{S^{(\nu_3)}} \circ
      {}^t\hat{\Gamma}_{\nu_3} \right) \right)_* \Delta_3
\end{equation} 
is zero in $\CH^*(S^{(\mu_1)} \times S^{(\mu_2)} \times S^{(\mu_3)})$
for all partitions $\mu_1, \mu_2, \mu_3$, and all partitions $\nu_1,
\nu_2, \nu_3$ and all $i,j,k$ such that
$$ \left( \left(\hat{\Gamma}_{\nu_1} \circ \pi^i_{S^{(\nu_1)}} \circ 
    {}^t\hat{\Gamma}_{\nu_1} \right) \otimes
  \left(\hat{\Gamma}_{\nu_2} \circ \pi^j_{S^{(\nu_2)}} \circ
    {}^t\hat{\Gamma}_{\nu_2} \right) \otimes
  \left(\hat{\Gamma}_{\nu_3} \circ \pi^k_{S^{(\nu_3)}} \circ
    {}^t\hat{\Gamma}_{\nu_3} \right) \right)_* [\Delta_3] =0 \mbox{ in
} \HH^*((S^{[n]})^3,\Q).$$ Note that the expression \eqref{eq 1} is
equal to
$$ \left[ \left( {}^t\hat{\Gamma}_{\mu_1} \otimes
    {}^t\hat{\Gamma}_{\mu_2} \otimes {}^t\hat{\Gamma}_{\mu_3} \right)
  \circ \left( \hat{\Gamma}_{\nu_1} \otimes \hat{\Gamma}_{\nu_2}
    \otimes \hat{\Gamma}_{\nu_3} \right) \circ \left(
    \pi^i_{S^{(\nu_1)}} \otimes \pi^j_{S^{(\nu_2)}}
    \otimes\pi^k_{S^{(\nu_3)}} \right) \circ \left(
    {}^t\hat{\Gamma}_{\nu_1} \otimes {}^t\hat{\Gamma}_{\nu_2} \otimes
    {}^t\hat{\Gamma}_{\nu_3} \right) \right]_*\Delta_3.$$ 
    But it is clear from Theorem \ref{thm dcm} that
$$\left( {}^t\hat{\Gamma}_{\mu_1} \otimes  {}^t\hat{\Gamma}_{\mu_2} 
  \otimes {}^t\hat{\Gamma}_{\mu_3} \right) \circ \left(
  \hat{\Gamma}_{\nu_1} \otimes \hat{\Gamma}_{\nu_2} \otimes
  \hat{\Gamma}_{\nu_3} \right) = \left\{
  \begin{array}{lr}
    0 & \mbox{if} \ (\nu_1,\nu_2,\nu_3) \neq (\mu_1,\mu_2,\mu_3)\\
    m_{\mu_1} m_{\mu_2} m_{\mu_3} \, \Delta_{S^{(\mu_1)} \times
      S^{(\mu_2)} 
      \times S^{(\mu_3)}} & \mbox{if} \ (\nu_1,\nu_2,\nu_3) 
    = (\mu_1,\mu_2,\mu_3).
  \end{array}
\right. $$ Thus we have proved the following criterion for the
Chow--K\"unneth decomposition \eqref{eq CKhilb} to be multiplicative~:

\begin{prop} \label{prop reduction} The Chow--K\"unneth decomposition
  \eqref{eq CKhilb} is multiplicative (equivalently, the motive of
  $S^{[n]}$ splits as in Theorem \ref{thm main}) if for all partitions
  $\mu_1, \mu_2$ and $\mu_3$ of the set $\{1,\ldots,n\}$ $$\left(
    \pi^i_{S^{\mu_1}} \otimes \pi^j_{S^{\mu_2}}
    \otimes\pi^k_{S^{\mu_3}} \right)_* \left({\Gamma}_{\mu_1} \otimes
    {\Gamma}_{\mu_2} \otimes {\Gamma}_{\mu_3} \right)^*\Delta_3 = 0
  \mbox{ in } \CH^*(S^{\mu_1} \times S^{\mu_2} \times S^{\mu_3})$$ as
  soon as $$\left( \pi^i_{S^{\mu_1}} \otimes \pi^j_{S^{\mu_2}}
    \otimes\pi^k_{S^{\mu_3}} \right)_* \left({\Gamma}_{\mu_1} \otimes
    {\Gamma}_{\mu_2} \otimes {\Gamma}_{\mu_3} \right)^*[\Delta_3] = 0
  \mbox{ in } \HH^*(S^{\mu_1} \times S^{\mu_2} \times S^{\mu_3},\Q). $$
  \vspace{-18pt}
  
  \qed
\end{prop}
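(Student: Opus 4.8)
The plan is to unwind the multiplicativity requirement through the de Cataldo--Migliorini isomorphism \eqref{eq dCM} until the condition is expressed entirely on the smooth products $S^{\mu}$, where the correspondences $\pi^i_{S^\mu}$ of \eqref{eq CKSm} and $\Gamma_\mu$ are explicit. Since for $S$ a K3 or abelian surface we may start from a self-dual Chow--K\"unneth decomposition of $S$, the induced decomposition \eqref{eq CKhilb} of $S^{[n]}$ is self-dual, and by the discussion preceding the proposition its multiplicativity is equivalent to the vanishing $(\pi^i_{S^{[n]}} \otimes \pi^j_{S^{[n]}} \otimes \pi^k_{S^{[n]}})_*\Delta_3 = 0$ in $\CH_{2n}((S^{[n]})^3)$ for every triple $(i,j,k)$ with $i+j+k \neq 8n$.

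First I would exploit \eqref{eq dCM}, whose inverse is $\sum_\mu \frac{1}{m_\mu}\hat{\Gamma}_\mu$, to detect the vanishing of a cycle on $(S^{[n]})^3$: because \eqref{eq dCM} identifies $\h((S^{[n]})^3)$ with $\bigoplus_{\mu_1,\mu_2,\mu_3}\h(S^{(\mu_1)} \times S^{(\mu_2)} \times S^{(\mu_3)})$ up to Tate twists, a cycle vanishes if and only if all of its images under $(\hat{\Gamma}_{\mu_1} \otimes \hat{\Gamma}_{\mu_2} \otimes \hat{\Gamma}_{\mu_3})^*$ vanish, as $\mu_1,\mu_2,\mu_3$ range over $\mathfrak{B}(n)$. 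Next I would record that the self-dual multiplicativity relations always hold modulo homological equivalence, so that only the triples $(i,j,k)$ for which the class $(\pi^i_{S^{[n]}} \otimes \pi^j_{S^{[n]}} \otimes \pi^k_{S^{[n]}})_*[\Delta_3]$ already vanishes in cohomology remain to be treated.

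With these two reductions in place I would substitute the explicit formula \eqref{eq CKhilb} for each $\pi^\bullet_{S^{[n]}}$, turning the quantity to be checked into the expression \eqref{eq 1}, which is the pushforward of $\Delta_3$ along a composite of correspondences of the form ${}^t\hat{\Gamma}_\mu \circ \hat{\Gamma}_\nu \circ \pi_{S^{(\nu)}} \circ {}^t\hat{\Gamma}_\nu$. The decisive step is the orthogonality supplied by Theorem \ref{thm dcm}: the composite ${}^t\hat{\Gamma}_{\mu_r} \circ \hat{\Gamma}_{\nu_r}$ is zero unless $\mu_r = \nu_r$, in which case it equals $m_{\mu_r}\,\Delta_{S^{(\mu_r)}}$. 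This collapses the sum over $(\nu_1,\nu_2,\nu_3)$ to the single diagonal term $(\nu_1,\nu_2,\nu_3) = (\mu_1,\mu_2,\mu_3)$, and after cancelling the nonzero scalar $m_{\mu_1}m_{\mu_2}m_{\mu_3}$ it reduces \eqref{eq 1} to the vanishing of $(\pi^i_{S^{(\mu_1)}} \otimes \pi^j_{S^{(\mu_2)}} \otimes \pi^k_{S^{(\mu_3)}})_* ({}^t\hat{\Gamma}_{\mu_1} \otimes {}^t\hat{\Gamma}_{\mu_2} \otimes {}^t\hat{\Gamma}_{\mu_3})_*\Delta_3$.

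Finally I would pass from the symmetric quotients $S^{(\mu)} = S^\mu/\mathfrak{S}_\mu$ to the products $S^\mu$. Under the identification $\CH^*(S^{(\mu)}) = \CH^*(S^\mu)^{\mathfrak{S}_\mu}$ the pullback to $S^{\mu_1} \times S^{\mu_2} \times S^{\mu_3}$ is injective, so the displayed cycle vanishes if and only if its lift does; under this lift $\hat{\Gamma}_\mu$ is replaced by $\Gamma_\mu$, the operator $({}^t\hat{\Gamma}_\mu)_*$ by the pullback $(\Gamma_\mu)^*$, and $\pi_{S^{(\mu)}}$ by the $\mathfrak{S}_\mu$-equivariant correspondence $\pi_{S^\mu}$ of \eqref{eq CKSm}, which is exactly the criterion stated. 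I expect the main obstacle to be purely organizational: maintaining the transpose and pushforward--pullback dictionary correctly across the three tensor factors, and checking that the cohomological hypothesis is preserved verbatim under each reduction so that the final criterion is conditioned on precisely the same vanishing in $\HH^*(S^{\mu_1} \times S^{\mu_2} \times S^{\mu_3},\Q)$.
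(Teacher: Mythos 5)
Your proposal is correct and follows essentially the same route as the paper's own derivation (the discussion in Section~\ref{sec mult} leading up to the proposition): reduce multiplicativity via self-duality to the vanishing of $(\pi^i\otimes\pi^j\otimes\pi^k)_*\Delta_3$ for $i+j+k\neq 8n$, detect vanishing by pulling back along the de Cataldo--Migliorini isomorphism \eqref{eq dCM}, substitute \eqref{eq CKhilb} to arrive at \eqref{eq 1}, and collapse the sum using the orthogonality ${}^t\hat{\Gamma}_{\mu}\circ\hat{\Gamma}_{\nu}=0$ for $\mu\neq\nu$ and $m_\mu\,\Delta_{S^{(\mu)}}$ for $\mu=\nu$ supplied by Theorem~\ref{thm dcm}. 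The only (harmless) difference is presentational: you spell out explicitly the self-duality hypothesis and the final lift from $S^{(\mu)}$ to $S^{\mu}$ via $\CH^*(S^{(\mu)})=\CH^*(S^\mu)^{\mathfrak{S}_\mu}$, both of which the paper uses implicitly.
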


\section{Proof of Theorem \ref{thm main} and Theorem \ref{thm bigrading}} 

The proof is inspired by the
proof of Claire Voisin's \cite[Theorem 5.1]{voisin diag}. In fact,  because of
\cite[Proposition 8.12]{sv}, Theorem \ref{thm main} for K3 surfaces implies
\cite[Theorem 5.1]{voisin diag}. The first
step towards the proof of Theorem \ref{thm main} is to understand the
cycle $\left({\Gamma}_{\mu_1} \otimes {\Gamma}_{\mu_2} \otimes
  {\Gamma}_{\mu_3} \right)^*\Delta_3$. The following proposition, due
to Voisin \cite{voisin diag} (see also \cite{voisin2}), builds on the
work of Ellingsrud, G\"ottsche and Lehn \cite{egl}. Here, $S$ is a
smooth projective surface and $\Delta_k$ is the class of the small
diagonal inside $S^k$ in $\CH_2(S^k)$.

\begin{prop} [Voisin, Proposition 5.6 in \cite{voisin diag}] \label{prop voisin}

For any set of partitions $\boldsymbol\mu :=
  \{\mu_1,\ldots,\mu_k \}$ of $\{1,\ldots,n\}$, there exists a
  universal (i.e., independent of $S$) polynomial $P_{\boldsymbol\mu}$
  with the following property~:
$$({\Gamma}_{\mu_1} \otimes \ldots \otimes {\Gamma}_{\mu_k})^*\Delta_k 
= P_{\boldsymbol\mu}(pr_r^*c_2(S), pr_{r'}^*K_S, pr_{s,t}^*\Delta_S)
\quad \mbox{in} \ \CH^*(S^{\boldsymbol\mu}),$$ where the $pr_r$'s are
the projections from $S^{\boldsymbol\mu} := \prod_i S^{\mu_i} \simeq
S^N$ to its factors, and the $pr_{s,t}$'s are the projections from
$S^{\boldsymbol\mu}$ to the products of two of its factors. \qed
\end{prop}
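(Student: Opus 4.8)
The plan is to reduce the pullback $({\Gamma}_{\mu_1} \otimes \ldots \otimes {\Gamma}_{\mu_k})^*\Delta_k$ to an explicit computation in the cohomology--motive of products of symmetric powers, and then to invoke the results of Ellingsrud, G\"ottsche and Lehn \cite{egl} to express everything in terms of the generators $c_2(S)$, $K_S$ and the diagonals $\Delta_S$. First I would recall that each $\Gamma_\mu$ is, by construction, a correspondence between $S^\mu \simeq S^{l(\mu)}$ and $S^{[n]}$, and that composing with the Hilbert--Chow morphism relates these incidence cycles to the tautological classes on $S^{[n]}$ studied in \cite{egl}. The key observation is that the cycle $({\Gamma}_{\mu_1} \otimes \ldots \otimes {\Gamma}_{\mu_k})^*\Delta_k$ can be rewritten as the pushforward to $S^{\boldsymbol\mu}$ of a cycle on the fibre product $\Gamma_{\mu_1} \times_{(S^{[n]})} \cdots \times_{(S^{[n]})} \Gamma_{\mu_k}$ over the small diagonal of $(S^{[n]})^k$; this is where the geometry of nested Hilbert schemes enters.

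The technical heart of the argument is to show that these incidence varieties and their intersections are governed by the calculus of \cite{egl}, in which every tautological integral over $S^{[n]}$ is expressed as a universal polynomial in the Chern numbers of $S$. Concretely, I would use the description of the cohomology of $S^{[n]}$ via the action of the correspondences $\Gamma_\mu$ to reduce the computation of $({\Gamma}_{\mu_1} \otimes \ldots \otimes {\Gamma}_{\mu_k})^*\Delta_k$ to evaluating intersection products of the cycles $\Gamma_{\mu_i}$ and the small diagonal $\Delta_{S^{[n]}}$ on the relevant products. The crucial point, following Voisin, is that the result of such an evaluation is \emph{universal}: it does not depend on the surface $S$, only on the combinatorial data $\boldsymbol\mu$, because all the geometry is encoded in the Hilbert--Chow morphism and the tautological sheaves, whose characteristic classes are pulled back from universal formulae. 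By the projection formula, the answer is forced to be a polynomial expression in the pullbacks $pr_r^*c_2(S)$, $pr_{r'}^*K_S$ and the partial diagonals $pr_{s,t}^*\Delta_S$, since these generate the subring of $\CH^*(S^{\boldsymbol\mu})$ that is respected by the universal formalism.

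The main obstacle I expect is establishing the universality of the polynomial $P_{\boldsymbol\mu}$ rigorously, rather than merely computing individual cases. One must argue that the entire construction --- the incidence cycles $\Gamma_\mu$, their products, and the intersection with $\Delta_k$ --- fits into a family over the moduli of surfaces (or is defined by formulae that make sense uniformly in $S$), so that the output genuinely depends only on $\boldsymbol\mu$ and lands in the subring generated by $c_2(S)$, $K_S$ and the diagonals. This is precisely the content of \cite{egl}, where such universality for tautological integrals is proved by a deformation and bordism argument; the work here is to package $({\Gamma}_{\mu_1} \otimes \ldots \otimes {\Gamma}_{\mu_k})^*\Delta_k$ as a tautological class so that their machinery applies. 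Once universality is in hand, the specific shape of $P_{\boldsymbol\mu}$ is immaterial for our purposes, since Proposition \ref{prop reduction} only requires that the cycle be a polynomial in these generators, and I would defer the detailed combinatorics of \cite{egl} rather than reproduce it.
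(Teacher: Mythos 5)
The paper itself offers no proof of this proposition: it is quoted from Voisin (\cite[Proposition 5.6]{voisin diag}, a special case of \cite[Theorem 5.12]{voisin diag}), with the qed symbol indicating the proof lives in the cited source. Your sketch correctly locates the ingredients of that proof in the incidence geometry of the $\Gamma_\mu$ together with universality results in the style of Ellingsrud--G\"ottsche--Lehn, and your opening reduction is sound: $({\Gamma}_{\mu_1} \otimes \cdots \otimes {\Gamma}_{\mu_k})^*\Delta_k$ is the pushforward to $S^{\boldsymbol\mu}$ of a refined intersection class supported on the fibre product of the $\Gamma_{\mu_i}$ over the small diagonal of $(S^{[n]})^k$ (note this intersection is far from dimensionally transverse, so excess intersection must be handled, but that is compatible with the calculus below).

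The genuine gap is the central appeal to \cite{egl}. The Ellingsrud--G\"ottsche--Lehn theorem, and in particular the ``deformation and bordism argument'' you invoke, is a statement about numbers: integrals of tautological classes over $S^{[n]}$ are universal polynomials in the Chern numbers of $S$. Cobordism invariants cannot detect rational equivalence, so no amount of ``packaging $({\Gamma}_{\mu_1} \otimes \cdots \otimes {\Gamma}_{\mu_k})^*\Delta_k$ as a tautological class'' can make that machinery output the asserted identity in $\CH^*(S^{\boldsymbol\mu})$, which is an equality of cycles modulo rational equivalence, not merely a numerical or cohomological one. Likewise, two of your supporting claims are not arguments: the projection formula does not confine a pushforward to the subring generated by $pr_r^*c_2(S)$, $pr_{r'}^*K_S$, $pr_{s,t}^*\Delta_S$; and there is no moduli space of all smooth projective surfaces over which the construction could be spread to conclude universality. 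What Voisin actually does (\cite[Theorem 5.12]{voisin diag}, elaborating her earlier argument in \cite{voisin2}) is to rerun the EGL induction at the level of Chow groups: one inducts on $n$ via the nested Hilbert schemes $S^{[n,n+1]}$, using their explicit description over $S^{[n]} \times S$ and their morphisms to $S^{[n]}$ and $S^{[n+1]}$, and one checks that each elementary operation in the recursion --- pullback, intersection with tautological or incidence classes, and pushforward along these explicitly described morphisms --- takes universal polynomial expressions in $pr_r^*c_2(S)$, $pr_{r'}^*K_S$, $pr_{s,t}^*\Delta_S$ to expressions of the same shape, the cycles $\Gamma_\mu$ and the small diagonal being themselves expressible within this calculus. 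That Chow-theoretic induction is the missing idea; it is supplied by \cite{voisin diag} and \cite{voisin2}, not by \cite{egl} as stated in your proposal.
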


In fact, Proposition \ref{prop voisin} is a particular instance of \cite[Theorem
5.12]{voisin diag}. Another consequence of  \cite[Theorem 5.12]{voisin diag},
which will be used to prove Theorem \ref{thm bigrading}, is

\begin{prop} [Voisin] \label{prop voisin2} 
    For any partition $\mu$ of $\{1,\ldots,n\}$ and any polynomial $P$ in the
Chern classes of $S^{[n]}$, the cycle $(\Gamma_\mu)^*P$ of $S^\mu$ is a
      universal (i.e., independent of $S$) polynomial in the variables
$pr_r^*c_2(S), pr_{r'}^*K_S, pr_{s,t}^*\Delta_S$,  where the $pr_r$'s are
    the projections from $S^\mu\simeq
    S^N$ to its factors, and the $pr_{s,t}$'s are the projections from
    $S^{\mu}$ to the products of two of its factors. \qed
\end{prop}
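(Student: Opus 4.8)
The plan is to deduce Proposition \ref{prop voisin2} from Voisin's Theorem 5.12 in \cite{voisin diag} by exhibiting the Chern classes of $S^{[n]}$ as \emph{tautological} classes, i.e. classes built functorially out of the universal subscheme $\Xi_n \subset S \times S^{[n]}$. Theorem 5.12 asserts that the pullback along $\Gamma_\mu$ of any such tautological class is a universal (independent of $S$) polynomial in the variables $pr_r^* c_2(S)$, $pr_{r'}^* K_S$ and $pr_{s,t}^* \Delta_S$ on $S^\mu \simeq S^N$; Proposition \ref{prop voisin} is the instance of this obtained by applying it to the incidence classes themselves. Thus it suffices to show that every polynomial $P$ in the $c_i(S^{[n]})$ is of the type to which Theorem 5.12 applies, after which the statement follows by invoking that theorem term by term.

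The key input is the description of the tangent bundle of the Hilbert scheme. Writing $p : S \times S^{[n]} \to S^{[n]}$ for the projection, $\Xi_n$ for the universal subscheme and $\mathcal{I}_{\Xi_n}$ for its ideal sheaf, deformation theory of Hilbert schemes gives the identification $T_{S^{[n]}} \simeq \mathcal{H}om_p(\mathcal{I}_{\Xi_n}, \mathcal{O}_{\Xi_n})$, a vector bundle of rank $2n$. Applying Grothendieck--Riemann--Roch to $p$ (this is exactly the computation underlying the bordism-theoretic universality of Ellingsrud, G\"ottsche and Lehn \cite{egl}) expresses $\mathrm{ch}(T_{S^{[n]}})$, and hence each $c_i(S^{[n]})$, as a polynomial in the Chern classes of tautological bundles together with classes pushed forward from $\Xi_n$. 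Consequently any polynomial $P$ in the $c_i(S^{[n]})$ is itself a polynomial in tautological Chern classes on $S^{[n]}$, which is precisely the kind of class to which \cite[Theorem 5.12]{voisin diag} applies.

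With this identification in hand the conclusion is immediate: one applies $\Gamma_\mu^*$ to $P$ and invokes Theorem 5.12 on each tautological monomial appearing in $P$, obtaining a universal polynomial in $pr_r^* c_2(S)$, $pr_{r'}^* K_S$ and $pr_{s,t}^* \Delta_S$ as claimed. The main obstacle is the verification that the Chern classes of the tangent bundle genuinely lie within the scope of Voisin's universality machinery --- in other words, that the relative-$\mathcal{H}om$ description of $T_{S^{[n]}}$ and its Grothendieck--Riemann--Roch expansion produce only tautological classes that are compatible with the incidence correspondences $\Gamma_\mu$. This compatibility, which is what permits the universal formula to be transported from $S^{[n]}$ to $S^\mu$, rests on the geometry of the nested Hilbert schemes underlying the proof of Theorem 5.12 in \cite{voisin diag} and on the universality of \cite{egl}; once it is granted, the deduction requires no further computation.
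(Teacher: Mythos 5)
Your proposal is correct and takes essentially the same route as the paper, which offers no independent argument and simply records the proposition as a direct consequence of \cite[Theorem 5.12]{voisin diag}. Your additional verification that the $c_i(S^{[n]})$ fall within the scope of that theorem --- via $T_{S^{[n]}} \simeq \mathcal{H}om_p(\mathcal{I}_{\Xi_n},\mathcal{O}_{\Xi_n})$ and Grothendieck--Riemann--Roch in the spirit of \cite{egl} --- is sound but not needed, since Voisin's Theorem 5.12 already admits polynomials in the Chern classes of the tangent bundle of $S^{[n]}$ (not only Chern classes of tautological sheaves) among its allowed inputs, exactly as the paper uses it.
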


We first prove Theorems \ref{thm main} \& \ref{thm bigrading} for $S$ a K3
surface and then
for $S$ an abelian surface. Note that clearly a multiplicative Chow--K\"unneth
decomposition $\{\pi^i_{S^{[n]}}, 0\leq i \leq 4n\}$ induces a multiplicative
bigrading on the  Chow ring $\CH^*(S^{[n]})$ :
 $$\CH^*(S^{[n]}) = \bigoplus_{i,s} \CH^i(S^{[n]})_s, \quad \mbox{where } 
\CH^i(S^{[n]})_s = (\pi^{2i-s}_{S^{[n]}})_*\CH^i(S^{[n]}).$$ 
Thus once Theorem \ref{thm main} is established it only remains to show that the
Chern classes of $S^{[n]}$ sit in $\CH^*(S^{[n]})_0$ in order to conclude.

\subsection{The Hilbert scheme of points on a K3 surface} Let $S$ be a
smooth projective surface and let $\o$ be a zero-cycle of degree $1$
on $S$.  Let $m$ be a positive integer and consider the $m$-fold
product $S^m$ of $S$. Let us define the idempotent
correspondences 
\begin{equation} \label{eq CK surface}
\pi_S^0 := pr_1^*\o = \o \times S, \quad \pi_S^{4} =
pr_2^*\o = S \times \o, \quad \mbox{and} \quad \pi_S^2 := \Delta_S -
\pi_S^0-\pi_S^4.
\end{equation}
(Note that $\pi^2_S$ is not quite a Chow--K\"unneth projector, it projects onto
$\HH^1(S,\Q) \oplus \HH^2(S,\Q) \oplus \HH^3(S,\Q)$.)
 In this case, the idempotents $\pi_{S^m}^i :=
\sum_{i_1+\ldots + i_n = i} \pi_S^{i_1} \otimes \cdots \otimes
\pi_S^{i_n}$ given in \eqref{eq CKSm} are clearly sums of monomials of
degree $2m$ in $pr_r^*\o$ and $pr_{s,t}^*\Delta_S$. By Proposition
\ref{prop voisin}, it follows that for any smooth projective surface
$S$ and any zero-cycle $\o$ of degree $1$ on $S$
$$\left( \pi^i_{S^{\mu_1}} \otimes \pi^j_{S^{\mu_2}}
  \otimes\pi^k_{S^{\mu_3}} \right)_* \left({\Gamma}_{\mu_1} \otimes
  {\Gamma}_{\mu_2} \otimes {\Gamma}_{\mu_3} \right)^*\Delta_3$$ is a
polynomial $Q_{\boldsymbol \mu, i, j, k}$ in the variables
$pr_r^*c_2(S), pr_{r'}^*K_S, pr_{r''}^*\o$ and $pr_{s,t}^*\Delta_S$.

We now have the following key result which is due to Claire Voisin
\cite[Corollary 5.9]{voisin diag} and which relies in an essential way
on a theorem due to Qizheng Yin \cite{yin} that describes the
cohomological relations among the cycles $pr_{r,s}^*\pi^2_S$.
\begin{prop}[Voisin \cite{voisin diag}] \label{prop key} For all
  smooth projective surfaces $S$ and any degree-$1$ zero-cycle $\o$ on
  $S$, let $P$ be a polynomial (independent of $S$) in the variables
  $pr_r^*[c_2(S)]$, $pr_{r'}^*[K_S], pr_{r''}^*[\o]$ and
  $pr_{s,t}^*[\Delta_S]$ with value an algebraic cycle of $S^n$. If $P$ vanishes
for all smooth projective
  surfaces with $b_1(S)=0$, then the polynomial $P$ belongs to the
  ideal generated by the relations :
\begin{enumerate}[(a)]
\item $[c_2(S)] = \chi_{top}(S)[\o]$ ;
\item $[K_S]^2 = \deg(K_S^2) [\o]$ ;
\item $[\Delta_S] \cdot pr_1^*[K_S] = pr_1^*[K_S] \cdot pr_2^*[\o] +
  pr_1^*[\o] \cdot pr_2^*[K_S]$ ;
\item $[\Delta_3] = pr_{1,2}^*[\Delta_S]\cdot pr_3^*[\o] +
  pr_{1,3}^*[\Delta_S]\cdot pr_2^*[\o] + pr_{2,3}^*[\Delta_S]\cdot
  pr_1^*[\o] - pr_1^*[\o]\cdot pr_2^*[\o] - pr_1^*[\o]\cdot pr_3^*[\o]
  - pr_2^*[\o]\cdot pr_3^*[\o]$ ;
\item $[\Delta_S]^2 = \chi_{top}(S)\, pr_1^*[\o]\cdot pr_2^*[\o]$ ;
\item $[\Delta_S]\cdot pr_1^*[\o] = pr_1^*[\o]\cdot pr_2^*[\o]$. \qed
\end{enumerate}
\end{prop}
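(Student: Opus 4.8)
The plan is to reduce the statement to a purely cohomological problem about the ring generated by diagonal classes on $S^n$, for which Qizheng Yin's results provide a complete list of relations. Concretely, I would interpret the hypothesis as saying that $P$ is a polynomial expression in the generators $pr_r^*[c_2(S)]$, $pr_{r'}^*[K_S]$, $pr_{r''}^*[\o]$ and $pr_{s,t}^*[\Delta_S]$ that evaluates to $0$ for \emph{every} smooth projective surface $S$ with $b_1(S)=0$. The six relations (a)--(f) are all relations among these cohomology classes that hold universally (i.e.\ for all such $S$), so the content of the proposition is the converse: these six generate \emph{all} universal relations. Thus I want to show that if $P$ maps to $0$ under evaluation at every surface with $b_1=0$, then $P$ already lies in the ideal $I$ generated by (a)--(f).

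First I would set up the formal framework: let $R$ be the free (graded-)commutative polynomial ring on the abstract symbols corresponding to the generators above, so that for each surface $S$ there is an evaluation homomorphism $\mathrm{ev}_S : R \to \HH^*(S^n,\Q)$. The relations (a)--(f) are elements of $\ker(\mathrm{ev}_S)$ for every $S$ with $b_1=0$, hence $I \subseteq \bigcap_S \ker(\mathrm{ev}_S)$. The claim is the reverse inclusion. The natural strategy is to work modulo $I$: in the quotient $R/I$, relations (a)--(f) let me eliminate $c_2(S)$ entirely (via (a)), replace any square of $[K_S]$ and any product of $[K_S]$ with a diagonal by expressions in $[\o]$ (via (b),(c)), reduce every small diagonal $[\Delta_3]$ and every self-intersection or $[\o]$-multiple of $[\Delta_S]$ (via (d),(e),(f)). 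I would argue that after these reductions every class in $R/I$ has a canonical normal form — a $\Q$-linear combination of monomials in the $pr_{s,t}^*[\Delta_S]$ and the $pr_{r}^*[K_S]$, each multiplied by some $pr_r^*[\o]$'s, in which no two diagonal factors share an index and no forbidden product (those appearing on the left of (b)--(f)) occurs. This is the bookkeeping heart of the argument and amounts to checking that the rewriting system given by (a)--(f) is confluent and terminating.

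The main obstacle is the final step: showing these normal-form monomials are \emph{linearly independent} after evaluation, i.e.\ that $\mathrm{ev}_S$ is injective on $R/I$ for a suitable test surface (or family of surfaces) with $b_1=0$. This is exactly where Yin's theorem enters: it identifies the subalgebra of $\HH^*(S^n,\Q)$ generated by the diagonal and the canonical classes, and states precisely which relations hold, so that the surviving normal-form monomials are cohomologically independent. I would invoke Yin's description to conclude that no $\Q$-linear combination of distinct normal-form monomials can vanish under $\mathrm{ev}_S$ for all $S$ with $b_1=0$, unless it is already trivial in $R/I$. Combining this injectivity with the reduction step yields $\bigcap_S \ker(\mathrm{ev}_S) = I$, which is the assertion. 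Since the proposition is attributed to Voisin (Corollary 5.9 in \cite{voisin diag}) and rests on Yin \cite{yin}, I expect the actual proof to cite those sources for the independence statement rather than reprove it; my contribution in the sketch is to organize the reduction so that the burden falls cleanly on Yin's classification of cohomological relations among the $pr_{r,s}^*\pi^2_S$.
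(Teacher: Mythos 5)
The paper does not actually prove this proposition: it is quoted from Voisin (Corollary~5.9 of \cite{voisin diag}) with a \emph{qed} marking it as a cited result, and Voisin's proof in turn rests on Yin's theorem \cite{yin} describing the cohomological relations among the classes $pr_{r,s}^*\pi^2_S$. Your outline is a faithful reconstruction of the strategy of that cited argument: formalize evaluation maps $\mathrm{ev}_S$ from a free ring $R$, note that (a)--(f) lie in every kernel so that $I\subseteq\bigcap_S\ker(\mathrm{ev}_S)$, reduce modulo $I$ to normal-form monomials (diagonals with pairwise disjoint index pairs, no forbidden products), and invoke Yin's classification for the linear independence of the surviving monomials. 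So in substance you take the same route as the source on which the paper relies, correctly placing the entire nontrivial burden on Yin's theorem rather than reproving it.

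One point needs correction, and one needs care. First, your parenthetical suggestion that injectivity of $\mathrm{ev}_S$ on $R/I$ could be checked on \emph{a single} test surface cannot work: for any fixed $S$, Yin's theorem produces additional cohomological relations on $S^n$ once $n$ is large relative to $b_2(S)$ (finite-dimensionality relations, coming from the second fundamental theorem of invariant theory for the orthogonal group acting on $\HH^2$), so $\mathrm{ev}_S$ is \emph{not} injective on $R/I$ for fixed $S$ and large $n$. The independence of normal forms holds only because surfaces with $b_1(S)=0$ have unbounded $b_2$, so a universal relation of bounded degree must survive into the large-$b_2$ regime where the first fundamental theorem says the ideal is exactly the one generated by (a)--(f); your final phrasing (vanishing under $\mathrm{ev}_S$ for \emph{all} $S$ with $b_1=0$) has the right quantifier, but the single-surface option does not. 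Second, in setting up the free ring you must decide how to treat single-factor monomials that overflow the cohomological degree, such as $pr_r^*[\o]\cdot pr_r^*[K_S]$, which vanishes in $\HH^6(S,\Q)=0$ for every $S$ yet is not an evident consequence of (a)--(f) as literally written, and the $S$-dependent scalars $\chi_{top}(S)$ and $\deg(K_S^2)$ must be handled as indeterminates; these bookkeeping issues are exactly what your confluence/termination check would have to absorb, and they are resolved in Voisin's formulation by how the universal polynomials and their coefficients are defined there.
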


We may then specialize to the case where $S$ is a K3 surface.
Consider then a K3 surface $S$ and let $\o$ be the class of a point
lying on a rational curve of $S$.  Note that by definition of a K3 surface
$K_S=0$. The following theorem of Beauville and Voisin shows that the
relations (a)--(f) listed above actually hold modulo rational
equivalence.
\begin{thm}[Beauville--Voisin \cite{bv}] \label{thm bv} Let $S$ be a
  K3 surface and let $\o$ be a rational point lying on a rational
  curve on $S$. The following relations hold :
 \begin{enumerate}[(i)]
\item $c_2(S) = \chi_{top}(S) \o \ (=24\o)$ in $\CH^2(S)$ ;
\item $\Delta_3 = pr_{1,2}^*\Delta_S\cdot pr_3^*\o +
  pr_{1,3}^*\Delta_S\cdot pr_2^*\o + pr_{2,3}^*\Delta_S\cdot pr_1^*\o
  - pr_1^*\o\cdot pr_2^*\o - pr_1^*\o\cdot pr_3^*\o - pr_2^*\o\cdot
  pr_3^*\o$ in $\CH_2(S \times S \times S)$.
 \end{enumerate}
 \end{thm}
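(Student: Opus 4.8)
The plan is to deduce both relations from two geometric facts special to K3 surfaces: first, that $S$ carries an abundance of rational curves --- by Bogomolov--Mumford, every ample linear system contains a (possibly reducible, nodal) curve all of whose components are rational --- and second, that the Chow group of zero-cycles of a rational curve, and more generally of a product of rational curves, is as small as cohomology allows. Both \emph{(i)} and \emph{(ii)} are immediate in cohomology: \emph{(i)} is the cohomological relation (a) of Proposition \ref{prop key} together with $\chi_{top}(S)=24$, and \emph{(ii)} is the cohomological relation (d), which is valid for every surface with $b_1=0$. Thus in each case the entire content is to show that an \emph{a priori} only homologically trivial cycle is in fact rationally trivial.

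I would begin by pinning down the distinguished class. If $R\subset S$ is the image of a non-constant map $\PP^1\to S$, then any two points of $R$ are rationally equivalent on $S$, since $\CH_0(\PP^1)=\Z$ and one may push forward along the normalisation; write $\o$ for the resulting class. To see that $\o$ is independent of $R$, I would intersect two given rational curves with a curve in a very ample system whose components are rational (Bogomolov--Mumford): every intersection point lies on a rational component, so all the point-classes involved coincide, and a connectedness argument propagates this to any two rational curves. The same observation is the engine of everything that follows: for a rational curve $R$ and an arbitrary divisor $D$, the zero-cycle $R\cdot D$ is supported on $R$ and hence equals $(R\cdot D)\,\o$; writing a multiple of any ample class as a union of rational curves and using bilinearity then gives $D\cdot D'\in\Z\,\o$ for all divisors $D,D'$.

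For \emph{(i)}, I would localise $c_2(T_S)$ on rational curves. The cleanest model is an elliptically fibred K3 with only nodal ($I_1$) fibres: the differential of the fibration degenerates exactly at the $24$ nodes, each of which lies on its (rational, nodal) fibre and is therefore rationally equivalent to $\o$, so that $c_2(T_S)=24\,\o$. For a general K3 one proceeds by the same localisation principle, representing $c_2(T_S)$ by a zero-cycle supported on rational curves; combined with the degree computation this yields $c_2(S)=24\,\o$ in $\CH^2(S)$. As an immediate by-product, self-intersecting the large diagonal gives $[\Delta_S]^2=24\,\o\times\o$ (the self-intersection being the push-forward along $S\hookrightarrow S\times S$ of $c_2(T_S)$), which is the Chow-theoretic form of relation (e).

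For \emph{(ii)}, I would view $\Delta_3$ as the correspondence inducing the product map $\CH_0(S)\otimes\CH_0(S)\to\CH_0(S)$, which by the previous paragraph factors through $\Z\,\o$, exactly as encoded by the right-hand side. The difference of the two cycles is a symmetric, homologically trivial class in $\CH_2(S^3)$, and I would prove its vanishing by spreading out and specialising the three points to rational curves: on a product $R_1\times R_2\times R_3$ of rational curves the cycle class map on $2$-cycles is injective, so homological and rational equivalence coincide and the relation reduces to the cohomological computation, after which the abundance of rational curves propagates the identity to all of $S^3$. The main obstacle, common to \emph{(i)} and \emph{(ii)}, is precisely this passage from homological to rational equivalence: on a K3 surface $\CH_0$ is famously huge, and the decisive point is that the special geometry --- trivial canonical class, the holomorphic symplectic form, and the Bogomolov--Mumford supply of rational curves --- forces all the relevant intersection products, as well as $c_2$, to collapse onto the single class $\o$.
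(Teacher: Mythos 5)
You should first note that the paper itself does not prove Theorem \ref{thm bv}: it is imported verbatim from Beauville--Voisin \cite{bv}, so your attempt has to be measured against the argument of that paper. Your preliminary steps are sound and do agree with \cite{bv}: the well-definedness of $\o$ via Bogomolov--Mumford and chains of rational components, the consequence $D\cdot D'\in\Z\,\o$ for divisors, the special-case computation on an elliptically fibred K3 with only $I_1$ fibres (the degeneracy locus of $df\in H^0(\Omega_S\otimes f^*T_{\PP^1})$ has class $c_2(\Omega_S)+c_1(\Omega_S)\cdot c_1(f^*T_{\PP^1})+c_1(f^*T_{\PP^1})^2=c_2(S)$, and it consists of the $24$ nodes, each on a rational fibre), and the observation that \emph{(i)} gives $\Delta_S\cdot\Delta_S=(\iota_{\Delta})_*c_2(T_S)=24\,\o\times\o$ where $\iota_\Delta\colon S\hookrightarrow S\times S$ is the diagonal embedding. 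But at both of the decisive steps your proposal asserts rather than proves.

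For \emph{(i)}, the sentence ``for a general K3 one proceeds by the same localisation principle, representing $c_2(T_S)$ by a zero-cycle supported on rational curves'' is a genuine gap: a general K3 carries no fibration, and there is no a priori reason that $c_2(T_S)$ is rationally equivalent to a cycle supported on rational curves --- that claim \emph{is} the theorem. Nor can you specialise from elliptic K3s: the locus in moduli where a fixed Chow-theoretic identity holds is only a countable union of closed subsets, and elliptic K3s form countably many divisors, so their density propagates nothing to the very general surface. For \emph{(ii)}, two steps fail. First, the fact that $\Delta_3$ and the right-hand side induce the same multiplication maps (e.g.\ $D\cdot D'\in\Z\,\o$) is far weaker than equality of the two cycles in $\CH_2(S^3)$; equality of correspondence actions never yields rational equivalence of correspondences. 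Second, vanishing of the difference cycle after restriction to products $R_1\times R_2\times R_3$ says nothing about its class on $S^3$: rational curves form a countable family which does not cover $S$, and even for a covering family restriction on $\CH_2(S^3)$ is nowhere near injective, so ``the abundance of rational curves propagates the identity to all of $S^3$'' has no mechanism behind it. In \cite{bv} the passage from homological to rational equivalence --- which you correctly identify as the entire content --- is achieved quite differently: one sweeps out $S$ by the one-parameter family of genus-one members of an ample linear system, chooses origins on a fixed rational curve, applies fibrewise the genus-one (Gross--Schoen type) vanishing of the modified diagonal, pushes the relative relation forward to $S^3$, and disposes of the resulting boundary and excess terms using the divisor-product result; statement \emph{(i)} then follows by a global intersection-theoretic manipulation of the diagonal relation, not by localisation. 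Your closing sentence, that the special geometry ``forces all the relevant intersection products, as well as $c_2$, to collapse onto the single class $\o$,'' is a restatement of the theorem rather than an argument, so as written the proposal does not constitute a proof.
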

 
 The proof of Theorem \ref{thm main} in the case when $S$ is a K3
 surface is then immediate~: by the discussion above if the cycle
 $\delta:= \left( \pi^i_{S^{\mu_1}} \otimes \pi^j_{S^{\mu_2}}
   \otimes\pi^k_{S^{\mu_3}} \right)_* \left({\Gamma}_{\mu_1} \otimes
   {\Gamma}_{\mu_2} \otimes {\Gamma}_{\mu_3} \right)^*\Delta_3$ is
 zero in $\HH^*(S^{\mu_1} \times S^{\mu_2} \times S^{\mu_3},\Q)$, then by
 Proposition \ref{prop key} it belongs to the ideal generated by the
 relations (a)--(f). By Theorem \ref{thm bv}, the relations (a)--(f)
 actually hold modulo rational equivalence. Therefore, the cycle
 $\delta$ is zero in $\CH^*(S^{\mu_1} \times S^{\mu_2} \times S^{\mu_3})$. We
 may then conclude by invoking Proposition \ref{prop reduction}. 
 
 It remains to prove that the Chern classes $c_i(S^{[n]})$ sit in
$\CH^{i}(S^{[n]})_0$. It suffices to show that $(\pi^j_{S^{[n]}})_*c_i(S^{[n]})
= 0$ in $\CH^i(S^{[n]})$ as soon as $(\pi^j_{S^{[n]}})_*[c_i(S^{[n]})] = 0$ in
$\HH^{2i}(S^{[n]},\Q)$ (equivalently as soon as $j \neq 2i$). 
 By de Cataldo and Migliorini's theorem, it is enough to show for all partitions
$\mu$ of $\{1,\ldots,n\}$ that $(\Gamma_\mu)^*(\pi^j_{S^{[n]}})_*c_i(S^{[n]}) =
0$ in $\CH^*(S^{\mu})$ as soon as
$(\Gamma_\mu)^*(\pi^j_{S^{[n]}})_*[c_i(S^{[n]})] = 0$ in $\HH^{*}(S^{\mu},\Q)$.
  Proceeding as in section \ref{sec mult}, it is even enough to show that, for
all  partitions $\mu$ of $\{1,\ldots,n\}$,
$(\pi^j_{S^{\mu}})_*(\Gamma_\mu)^*c_i(S^{[n]}) = 0$ in $\CH^*(S^{\mu})$ as soon
as $(\pi^j_{S^{\mu}})_*(\Gamma_\mu)^*[c_i(S^{[n]})] = 0$ in
$\HH^{*}(S^{\mu},\Q)$.
   By Proposition \ref{prop voisin2}, $(\Gamma_\mu)^*c_i(S^{[n]})$ is a
universal polynomial in the variables $pr_r^*c_2(S), pr_{r'}^*K_S,
pr_{s,t}^*\Delta_S$. It follows that
$(\pi^j_{S^{\mu}})_*(\Gamma_\mu)^*c_i(S^{[n]})$ is also a universal polynomial
in the variables $pr_r^*c_2(S), pr_{r'}^*K_S, pr_{s,t}^*\Delta_S$.
   We can then conclude thanks to Proposition \ref{prop key} and Theorem
\ref{thm bv}.  \qed

\subsection{The Hilbert scheme of points on an abelian surface}
Let $A$ be an abelian surface. In that case, the Chow--K\"unneth
projectors $\{\pi_A^i\}$ given by the theorem of Deninger and Murre are
\emph{symmetrically distinguished} in the Chow ring $\CH^*(A \times A)$ in
the sense of O'Sullivan \cite{o'sullivan}. (We refer to \cite[Section 7]{sv} for
a summary of O'Sullivan's theory of symmetrically distinguished cycles on
abelian varieties.) Let us mention that the identity element $O_A$ of $A$ plays
the role of the Beauville--Voisin cycle $\o$ in the case of K3 surfaces,
\emph{e.g.} $\pi^0_A = O_A \times A$.
By O'Sullivan's theorem, the Chow--K\"unneth projectors $\pi^i_{A^m}$ given in
\eqref{eq CKSm} are symmetrically distinguished  for all positive
integers $m$. By Proposition
\ref{prop voisin}, the cycle $\left({\Gamma}_{\mu_1} \otimes
  {\Gamma}_{\mu_2} \otimes {\Gamma}_{\mu_3} \right)^*\Delta_3$ is a
polynomial in the variables $pr_r^*c_2(A)$, $pr_{r'}^*K_A $ and
$pr_{s,t}^*\Delta_A$. Since $c_2(A) = 0 $ and $ K_A=0$, this cycle is
in fact symmetrically distinguished. It immediately follows
that $$\left( \pi^i_{A^{\mu_1}} \otimes \pi^j_{A^{\mu_2}}
  \otimes\pi^k_{A^{\mu_3}} \right)_* \left({\Gamma}_{\mu_1} \otimes
  {\Gamma}_{\mu_2} \otimes {\Gamma}_{\mu_3} \right)^*\Delta_3$$ is
symmetrically distinguished. Thus by O'Sullivan's theorem
\cite{o'sullivan}, this cycle is rationally trivial if and only if it
is numerically trivial. By Proposition \ref{prop reduction}, we
conclude that $A^{[n]}$ has a multiplicative Chow--K\"unneth
decomposition. The proof of Theorem \ref{thm main} is now
complete. 

 It remains to prove that the Chern classes $c_i(A^{[n]})$ sit in
$\CH^{i}(A^{[n]})_0$. As in the case of K3 surfaces, it suffices to show that,
for all  partitions $\mu$ of $\{1,\ldots,n\}$,
$(\pi^j_{A^{\mu}})_*(\Gamma_\mu)^*c_i(A^{[n]}) = 0$ in $\CH^*(A^{\mu})$ as soon
as $(\pi^j_{A^{\mu}})_*(\Gamma_\mu)^*[c_i(A^{[n]})] = 0$ in
$\HH^{*}(A^{\mu},\Q)$.
By Proposition \ref{prop voisin2}, $(\Gamma_\mu)^*c_i(A^{[n]})$ is a 
polynomial in the variables $pr_r^*c_2(A) = 0, pr_{r'}^*K_A = 0,
pr_{s,t}^*\Delta_A$. It follows that the cycle
$(\pi^j_{A^{\mu}})_*(\Gamma_\mu)^*c_i(A^{[n]})$ is symmetrically distinguished.
    We can then conclude thanks to O'Sullivan's theorem.  \qed

\section{Decomposition theorems for the relative Hilbert scheme of
  abelian surface schemes and of families of K3 surfaces}

In this section, we generalize Voisin's decomposition theorem
\cite[Theorem 0.7]{voisin k3} for families of K3 surfaces to families
of Hilbert schemes of points on K3 surfaces or abelian surfaces.

Let $\pi : \mathcal{X} \rightarrow B$ be a smooth projective
morphism. Deligne's decomposition theorem states the following~:
\begin{thm}[Deligne \cite{deligne}]
  In the derived category of sheaves of $\Q$-vector spaces on $B$,
  there is a decomposition (which is non-canonical in general)
\begin{equation} \label{eq deligne}
R\pi_*\Q \cong \bigoplus_i R^i\pi_*\Q[-i].
\end{equation}
\end{thm}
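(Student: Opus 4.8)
The plan is to derive the decomposition \eqref{eq deligne} from the relative hard Lefschetz theorem, via Deligne's formal principle that an object of a derived category carrying a Lefschetz operator splits into its shifted cohomology objects.

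First I would produce the Lefschetz operator. After restricting to a connected component of $B$, fix a $\pi$-relatively ample line bundle on $\mathcal{X}$ and let $\eta \in \HH^2(\mathcal{X},\Q)$ be its first Chern class. Cup product with $\eta$ defines a morphism $L : R\pi_*\Q \to R\pi_*\Q[2]$ in the derived category, inducing on cohomology the maps $L : R^i\pi_*\Q \to R^{i+2}\pi_*\Q$. Since $\pi$ is smooth and projective, Ehresmann's theorem shows it is a locally trivial $C^\infty$-fibration, so each $R^i\pi_*\Q$ is a local system on $B$ and $L$ is a morphism of local systems. Writing $n$ for the relative dimension, classical hard Lefschetz on a single fibre $X_b$ says that $\eta^k \cup (-) : \HH^{n-k}(X_b,\Q) \to \HH^{n+k}(X_b,\Q)$ is an isomorphism for every $k \geq 0$. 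As $L^k : R^{n-k}\pi_*\Q \to R^{n+k}\pi_*\Q$ is a map of local systems that is an isomorphism on the stalk at $b$, it is an isomorphism of local systems (on each connected component of $B$). This is the relative hard Lefschetz theorem.

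Next I would invoke the formal decomposition lemma. Re-index so that the cohomology objects of $K := R\pi_*\Q$ sit symmetrically about degree $0$ and hard Lefschetz reads $L^k : \mathcal{H}^{-k}(K) \xrightarrow{\sim} \mathcal{H}^k(K)$. Deligne's lemma asserts that any bounded complex $K$ in the derived category of an abelian category, equipped with $L : K \to K[2]$ satisfying these isomorphisms, is isomorphic to $\bigoplus_i \mathcal{H}^i(K)[-i]$; applied here this is exactly \eqref{eq deligne}. The proof of the lemma is where the work lies: hard Lefschetz equips $\bigoplus_i \mathcal{H}^i(K)$ with the structure of a finite-dimensional $\mathfrak{sl}_2$-representation (with $L$ the raising operator and the Lefschetz decomposition into primitive parts), and one lifts this to a splitting of $K$ itself by producing, through universal polynomials in $L$, sections of the truncation morphisms $K \to \mathcal{H}^i(K)[-i]$ that are mutually orthogonal.

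The main obstacle is precisely the construction of these sections inside the derived category: one must check that the candidate idempotents assembled from powers of $L$ really are morphisms in $D^b$, that they are idempotent, and that the resulting summands are orthogonal and carry the correct cohomology --- the subtlety being that only the cohomology objects, not $K$ itself, come with an a priori grading. The lowering operator $\Lambda$ completing the $\mathfrak{sl}_2$-triple is not canonically defined on $K$, which is exactly why the resulting isomorphism \eqref{eq deligne} is non-canonical and depends on the auxiliary choices (the polarization $\eta$ and the splitting data). Once the lemma is in place, the theorem follows immediately from the relative hard Lefschetz input established above.
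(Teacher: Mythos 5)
The paper does not actually prove this statement---it is imported verbatim, with citation, from Deligne's \emph{Th\'eor\`eme de Lefschetz et crit\`eres de d\'eg\'en\'erescence de suites spectrales}---and your proposal correctly reconstructs exactly the argument of that reference: relative hard Lefschetz for the local systems $R^i\pi_*\Q$ (deduced fibrewise from classical hard Lefschetz via Ehresmann and the rigidity of morphisms of local systems), followed by Deligne's formal criterion that a bounded complex $K$ equipped with $L : K \to K[2]$ inducing isomorphisms $L^k : \mathcal{H}^{-k}(K) \stackrel{\sim}{\longrightarrow} \mathcal{H}^{k}(K)$ splits as $\bigoplus_i \mathcal{H}^i(K)[-i]$. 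This is essentially the same (and the standard) approach; your invoking the splitting criterion rather than re-proving it is legitimate, since that criterion is precisely the ``crit\`ere de d\'eg\'en\'erescence'' of the cited paper, and you correctly identify both where the real work lies and the source of the non-canonicity.
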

Both sides of \eqref{eq deligne} carry a cup-product~: on the
right-hand side the cup-product is the direct sum of the usual
cup-products $R^i\pi_*\Q \otimes R^j\pi_*\Q \rightarrow R^{i+j}\pi_*\Q
$ defined on local systems, while on the left-hand side the derived
cup-product $R\pi_*\Q \otimes R\pi_*\Q \rightarrow R\pi_*\Q $ is such
that it induces the usual cup-product in cohomology. As explained in
\cite{voisin k3}, the isomorphism \eqref{eq deligne} does not respect
the cup-product in general. Given a family of smooth projective
varieties $\pi : \mathcal{X} \rightarrow B$, Voisin \cite[Question
0.2]{voisin k3} asked if there exists a decomposition as in \eqref{eq
  deligne} which is multiplicative, \emph{i.e.}, which is compatible
with cup-product. By Deninger--Murre \cite{dm}, there does exist such
a decomposition for an abelian scheme $\pi : \mathcal{A} \rightarrow
B$. The main result of \cite{voisin k3} is~:

\begin{thm}[Voisin \cite{voisin k3}] \label{thm dec voisin} For any
  smooth projective family $\pi : \mathcal{X} \rightarrow B$ of K3
  surfaces, there exist a decomposition isomorphism as in \eqref{eq
    deligne} and a nonempty Zariski open subset $U$ of $B$, such that
  this decomposition becomes multiplicative for the restricted family
  $\pi|_U : \mathcal{X}|_U \rightarrow U$.
\end{thm}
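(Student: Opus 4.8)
The plan is to realise the multiplicativity of a Deligne decomposition as the cohomological shadow of a \emph{relative} multiplicative Chow--K\"unneth decomposition, and then to run the cohomology-to-rational-equivalence argument already used for Theorem \ref{thm main}, this time over the generic point of $B$. Write $W := \mathcal{X} \times_B \mathcal{X} \times_B \mathcal{X}$ and let $\Delta_3^{\mathrm{rel}} \in \CH_2(W)$ be the class of the relative small diagonal. A splitting \eqref{eq deligne} is (non-canonically) the same datum as a family of relative K\"unneth projectors, and it is multiplicative exactly when the relative multiplication induced by $\Delta_3^{\mathrm{rel}}$, viewed as a relative self-correspondence, respects the grading. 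First I would lift, over a nonempty open $U \subseteq B$, the Beauville--Voisin projectors \eqref{eq CK surface} to relative idempotents $\pi^0, \pi^2, \pi^4 \in \CH^2((\mathcal{X}\times_B\mathcal{X})|_U)$: using that every fibre carries the Beauville--Voisin zero-cycle $\o$ and that $\o$ spreads out to a relative degree-$1$ zero-cycle over a suitable $U$, set $\pi^0 = \o \times_U \mathcal{X}$, $\pi^4 = \mathcal{X} \times_U \o$ and $\pi^2 = \Delta_{\mathcal{X}/U} - \pi^0 - \pi^4$. These act on $R\pi_*\Q|_U$ and cut out the summands $R^i\pi_*\Q[-i]$.

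The obstruction to multiplicativity is then the finite collection of relative cycles $(\pi^i \otimes \pi^j \otimes \pi^k)_* \Delta_3^{\mathrm{rel}}$ with $i+j+k \neq 8$, living in $\CH^*(W|_U)$. By the family version of Proposition \ref{prop voisin} --- this is the content of \cite[Theorem 5.12]{voisin diag}, whose Ellingsrud--G\"ottsche--Lehn input is universal and hence valid fibrewise and over the generic fibre --- each such cycle is a universal polynomial in the basic classes $pr_r^* c_2$, $pr_{r'}^* K$ and $pr_{s,t}^* \Delta$ of the fibres. Exactly as in the absolute case, all three multiplicativity relations hold modulo homological equivalence, so every obstruction cycle vanishes in the relative cohomology of $W|_U$. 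Proposition \ref{prop key} then applies to show that these universal polynomials lie in the ideal generated by the Beauville--Voisin relations (a)--(f).

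The genuinely family-theoretic step is to promote those relations from the fibres to rational equivalence over a nonempty open. Here I would pass to the generic fibre $\mathcal{X}_\eta$, a K3 surface over the function field $k(B)$. Since Theorem \ref{thm bv} --- and hence Proposition \ref{prop key} --- is valid over an arbitrary field, the relations (a)--(f) hold in the Chow groups of the powers of $\mathcal{X}_\eta$, so the obstruction cycles vanish in $\CH^*(\mathcal{X}_\eta \times \mathcal{X}_\eta \times \mathcal{X}_\eta)$. Because $\CH^*(\mathcal{X}_\eta \times \mathcal{X}_\eta \times \mathcal{X}_\eta) = \varinjlim_U \CH^*(W|_U)$, each of the finitely many obstruction cycles already vanishes over some nonempty open of $B$; intersecting these opens produces a single $U$ over which all of them are rationally trivial. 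By the relative analogue of Proposition \ref{prop reduction}, the relative Chow--K\"unneth decomposition is multiplicative over $U$, and translating back through the dictionary above yields a decomposition \eqref{eq deligne} that is multiplicative for $\pi|_U$.

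The hard part will be twofold. First, making the dictionary precise: one must verify that a relative multiplicative Chow--K\"unneth decomposition genuinely induces a cup-product-compatible splitting of $R\pi_*\Q$ in the derived category, i.e. that $\Delta_3^{\mathrm{rel}}$ computes the derived cup-product and that idempotents in relative Chow split the complex $R\pi_*\Q$ compatibly with it. Second, controlling the open set: one needs a common $U$ over which the relative zero-cycle $\o$ exists, the $\pi^i$ are honest idempotents, and all obstruction cycles are rationally trivial. By contrast the cohomology-to-rational-equivalence lifting is already packaged in Propositions \ref{prop voisin} and \ref{prop key} together with Theorem \ref{thm bv}, so the new content sits entirely in the spreading-out and the derived-category translation.
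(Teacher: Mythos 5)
Your proposal is correct and follows essentially the same route as the paper: the paper cites Voisin for Theorem \ref{thm dec voisin} itself, but its proof of the generalization (Theorem \ref{thm dec}) is precisely your argument --- spread the Beauville--Voisin projectors (via $\o = \frac{1}{24}c_2$ on the generic fibre, which resolves your worry about spreading out the zero-cycle) to relative idempotents over an open $U$, use \cite[Lemma 2.1]{voisin k3} as the dictionary between relative Chow--K\"unneth idempotents and splittings of $R\pi_*\Q$, reduce multiplicativity to the vanishing of $(\Pi^i \otimes \Pi^j \otimes \Pi^k)_*\Delta_3^{\mathrm{rel}}$ for $i+j+k \neq 8$, check this at the generic point, and spread out. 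Your only deviation is cosmetic: for the surface case $n=1$ the detour through Propositions \ref{prop voisin} and \ref{prop key} is unnecessary, since Theorem \ref{thm bv}(ii) (equivalently \cite[Proposition 8.14]{sv}) directly gives multiplicativity of the Beauville--Voisin decomposition on the generic fibre over $k(B)$.
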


Our main result in this section is the following extension of Theorem
\ref{thm dec voisin}~:
\begin{thm} \label{thm dec} Let $\pi : \mathcal{X} \rightarrow B$ be
  either an abelian surface over $B$ or a smooth projective family of
  K3 surfaces. Consider $\pi^{[n]} : \mathcal{X}^{[n]} \rightarrow B$
  the relative Hilbert scheme of length-$n$ subschemes on $\mathcal{X}
  \rightarrow B$. Then there exist a decomposition isomorphism for
  $\pi^{[n]} : \mathcal{X}^{[n]} \rightarrow B$ as in \eqref{eq
    deligne} and a nonempty Zariski open subset $U$ of $B$, such that
  this decomposition becomes multiplicative for the restricted family
  $\pi^{[n]}|_U : \mathcal{X}^{[n]}|_U \rightarrow U$.
\end{thm}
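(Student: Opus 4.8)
The plan is to deduce Theorem \ref{thm dec} from the algebraic (Chow-theoretic) statement of Theorem \ref{thm main} by spreading out the absolute constructions over the base $B$. The key conceptual point is the well-known dictionary between a multiplicative Chow--K\"unneth decomposition on a fibre and a multiplicative Deligne decomposition for the family: the relative correspondences inducing the Chow--K\"unneth projectors, restricted to a general fibre, produce the K\"unneth projectors $R^i\pi_*\Q$, and their mutual orthogonality together with the factorization of the multiplication through $\h^{i+j}$ is exactly the statement that the resulting splitting \eqref{eq deligne} is compatible with cup-product.

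First I would set up the relative version of the de Cataldo--Migliorini correspondences $\Gamma_\mu$ and of the relative diagonal-type cycles $pr_r^*c_2, pr_{r'}^*K, pr_{s,t}^*\Delta$ over $B$ (or over a suitable open $U$). The point is that Voisin's Proposition \ref{prop voisin} and Proposition \ref{prop voisin2} are universal, hence they hold verbatim in the relative setting: the pullbacks $(\Gamma_{\mu_1}\otimes\cdots\otimes\Gamma_{\mu_k})^*\Delta_k$ are given by the \emph{same} universal polynomials $P_{\boldsymbol\mu}$ in the relative variables. Next I would define the relative Chow--K\"unneth projectors $\pi^i_{\mathcal{X}^{[n]}/B}$ by the formula \eqref{eq CKhilb}, built from the relative projectors $\pi^i_{\mathcal{X}/B}$ of the surface family (for K3's, coming from a relative Beauville--Voisin class $\o$, which exists after shrinking $B$ by \cite{voisin k3}; for abelian surfaces, from the relative zero section). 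These relative projectors give, fibrewise, precisely the decomposition of Theorem \ref{thm main}, and passing to the associated graded of $R\pi^{[n]}_*\Q$ yields a decomposition as in \eqref{eq deligne}.

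The crux is then to verify multiplicativity of this relative decomposition after restricting to a nonempty open $U\subset B$. Here I would run exactly the argument of Section 3 relatively: the obstruction to multiplicativity is the collection of cycles $\left(\pi^i_{\mathcal{X}^{\mu_1}/B}\otimes\pi^j_{\mathcal{X}^{\mu_2}/B}\otimes\pi^k_{\mathcal{X}^{\mu_3}/B}\right)_*\left(\Gamma_{\mu_1}\otimes\Gamma_{\mu_2}\otimes\Gamma_{\mu_3}\right)^*\Delta_3$ for $i+j+k\neq 8n$, which by Propositions \ref{prop voisin} and \ref{prop key} are universal polynomials lying in the ideal of the relations (a)--(f). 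In the K3 case Theorem \ref{thm bv} makes these relations hold fibrewise modulo rational equivalence, but only after passing to a general fibre; what one gets over $B$ is that these obstruction cycles are \emph{fibrewise rationally trivial}, hence (by the standard spreading-out principle, as in \cite[proof of Theorem 0.7]{voisin k3}) they vanish in $\CH^*(-)$ over some nonempty open $U$. In the abelian case the O'Sullivan symmetric-distinguishedness argument globalizes directly to the relative setting and gives the vanishing over an open.

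The main obstacle I expect is the passage from fibrewise rational triviality to the existence of a \emph{single} open $U$ on which the relative multiplicativity relations hold simultaneously, together with the compatibility of the relative Beauville--Voisin class and the relative Chow--K\"unneth projectors with Deligne's decomposition. This is precisely the content that Voisin developed in \cite{voisin k3}: one must check that the relative cycle-theoretic relations translate into cup-product-compatibility of the cohomological decomposition, and that the finitely many universal obstruction relations can be cleared on a common open subset. Since only finitely many partitions $\mu$ and finitely many triples $(i,j,k)$ are involved, taking the intersection of the finitely many resulting opens yields the desired $U$, and the fibrewise statement of Theorem \ref{thm main} guarantees that each individual relation holds generically. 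Thus the argument reduces Theorem \ref{thm dec} to Theorem \ref{thm main} plus Voisin's spreading-out machinery.
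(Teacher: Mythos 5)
Your proposal is correct in outline and lands on the same reduction as the paper --- Theorem \ref{thm dec} follows from Theorem \ref{thm main} by spreading out relative Chow--K\"unneth projectors obtained from \eqref{eq CKhilb} and invoking \cite[Lemma 2.1]{voisin k3} to convert them into a derived-category decomposition --- but your verification of multiplicativity is genuinely heavier than what the paper does, and in two places shakier. The paper never reruns the Section 3 argument relatively: it takes the generic fiber $X$ of $\pi$, a K3 or abelian surface over the function field of $B$, applies Theorem \ref{thm main} to $X^{[n]}$ over that field to conclude that the finitely many relations $\Pi^k \circ \Delta_3 \circ (\Pi^i \otimes \Pi^j) = 0$ for $k \neq i+j$ (i.e.\ \eqref{eq multderived}) hold at the generic point, and then spreads out, using that the Chow groups of the generic fiber are the colimit of the Chow groups over shrinking opens, to clear all of them on a single nonempty open $U$. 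This makes your relative versions of Propositions \ref{prop voisin}, \ref{prop voisin2} and \ref{prop key}, the relative relations (a)--(f), and the relative Beauville--Voisin class unnecessary. Two specific caveats about your version: first, ``fibrewise rationally trivial, hence vanish in $\CH^*$ over some nonempty open by spreading out'' is not correct as stated --- triviality on all closed fibers does not formally give vanishing over an open; the clean mechanism is vanishing on the \emph{generic} fiber (over $\C$ one can pass from very general closed fibers to the generic fiber by a countability argument, but that is precisely the extra step the generic-fiber formulation avoids); second, your claim that O'Sullivan's symmetric-distinguishedness argument ``globalizes directly to the relative setting'' is unsupported --- O'Sullivan's theorem concerns abelian varieties over a field, and the paper sidesteps any relative analogue by applying it only on the generic fiber. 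With these two repairs your argument collapses into the paper's; as written it carries avoidable risk.
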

\begin{proof}
The proof follows the original approach of Voisin \cite{voisin k3} (after
reinterpreting, as in \cite[Proposition 8.14]{sv}, the vanishing of the modified
diagonal cycle of Beauville--Voisin \cite{bv} as the multiplicativity of the
Beauville--Voisin Chow--K\"unneth decomposition).

  First, we note that there exist a nonempty Zariski open subset $U$
  of $B$ and relative Chow--K\"unneth projectors $\Pi^i :=
  \Pi^i_{\mathcal{X}^{[n]}|_U/U} \in \CH^{2n}(\mathcal{X}^{[n]}|_U
  \times_U \mathcal{X}^{[n]}|_U)$, which means that
  $\Delta_{\mathcal{X}|_U/U} = \sum_i \Pi^i$, $\Pi^i \circ \Pi^i =
  \Pi^i$, $\Pi^i \circ \Pi^j = 0$ for $i\neq j$, and $\Pi^i$ acts as
  the identity on $R^i(\pi^{[n]}|_U)_*\Q$ and as zero on
  $R^j(\pi^{[n]}|_U)_*\Q$ for $j\neq i$. Indeed, Let $X$ be the
  generic fiber of $\pi : \mathcal{X} \rightarrow B$. If $X$ is a K3
  surface, then we consider the degree $1$ zero-cycle $\o :=
  \frac{1}{24} c_2(X) \in \CH_0(X)$. We then have a Chow--K\"unneth
  decomposition for $X$ given by $\pi^0_X := pr_1^*\o, \pi^4_X :=
  pr_2^*\o$ and $\pi_X^2 := \Delta_X - \pi_X^0 - \pi_X^4$. If $X$ is
  an abelian surface, we may consider the Chow--K\"unneth
  decomposition of Deninger--Murre \cite{dm}. In both cases, these
  Chow--K\"unneth decompositions induce as in \eqref{eq CKhilb} a
  Chow--K\"unneth decomposition $\Delta_{X^{[n]}} = \sum_i
  \pi^i_{X^{[n]}}$ of the Hilbert scheme of points $X^{[n]}$. By
  spreading out, we obtain the existence of a sufficiently small but
  nonempty open subset $U$ of $B$ such that this Chow--K\"unneth
  decomposition spreads out to a relative Chow--K\"unneth
  decomposition $\Delta_{\mathcal{X}|_U/U} = \sum_i \Pi^i$.
  
  By \cite[Lemma 2.1]{voisin k3}, the relative idempotents $\Pi^i$
  induce a decomposition in the derived category $R\pi_*\Q \cong
  \bigoplus_{i=0}^{4n} \HH^i(R\pi_*\Q)[-i] = \bigoplus_{i=0}^{4n}
  R^i\pi_*\Q[-i]$ with the property that $\Pi^i$ acts as the identity
  on the summand $\HH^i(R\pi_*\Q)[-i]$ and acts as zero on the
  summands $\HH^j(R\pi_*\Q)[-j]$ for $j \neq i$.  Thus, in order to
  show the existence of a decomposition as in \eqref{eq deligne} that
  is multiplicative, it is enough to show, up to further shrinking
  $U$ if necessary, that the relative Chow--K\"unneth decomposition $\{\Pi^i\}$
above
  satisfies
  \begin{equation} \label{eq multderived} \Pi^k \circ \Delta_3 \circ
    (\Pi^i \otimes \Pi^j) = 0 \quad \mbox{in} \
    \CH^{4n}((\mathcal{X}^{[n]} \times_B \mathcal{X}^{[n]} \times_B
    \mathcal{X}^{[n]})|_U) \quad \mbox{for all} \ k\neq i+j.
\end{equation}
Here, $\Delta_3$ is the class of the relative small diagonal inside
$\CH^{4n}(\mathcal{X}^{[n]} \times_B \mathcal{X}^{[n]} \times_B
\mathcal{X}^{[n]})$. But then, by Theorem \ref{thm main}, the relation
\eqref{eq multderived} holds generically. Therefore, by spreading out,
\eqref{eq multderived} holds over a nonempty open subset of $B$. This
concludes the proof of the theorem.
\end{proof}



\end{document}